\renewcommand{\geq}{\geqslant}
\renewcommand{\leq}{\leqslant}
\def\volno{}\fi
\def\volyear{}\fi
\def\papno{}\fi
\newfont{\footsc}{cmcsc10 at 8truept}
\newfont{\footbf}{cmbx10 at 8truept}
\newfont{\footrm}{cmr10 at 10truept}
  \newlength{\BiblioSpacing}
  \renewenvironment{thebibliography}[1]{%
    \begin{oldthebibliography}{#1}%
      \setlength{\parskip}{\BiblioSpacing}
      \setlength{\itemsep}{\BiblioSpacing}
  }%
  {%
    \end{oldthebibliography}%
  }
\theoremstyle{plain}
\newtheorem{theorem}{Theorem}[section]
\newtheorem{lemma}[theorem]{Lemma}
\newtheorem{corollary}[theorem]{Corollary}
\newtheorem{proposition}[theorem]{Proposition}
\theoremstyle{definition}
\newtheorem*{definition}{Definition}
\newtheorem{fact}{Fact}
\theoremstyle{remark}
\def\R{\mathbb{R}}
\def\N{\mathbb{N}}
\def\parts{\operatorname{par}}
\def\len{\operatorname{len}}
\def\fst{\operatorname{fst}}
\def\d{\mbox{-}}
\title{\bf Counting words with Laguerre series}
\author{Jair Taylor\\
\small Department of Mathematics\\[-0.8ex]
\small University of Washington\\[-0.8ex] 
\small Seattle, WA, U.S.A.\\
\small\tt jptaylor@uw.edu\\
}
\begin{document}

\maketitle
\begin{abstract}
We develop a method for counting words subject to various restrictions by finding a combinatorial interpretation for a product of weighted sums of Laguerre polynomials with parameter $\alpha = -1$.  We describe how such a series can be computed by finding an appropriate ordinary generating function and applying a certain transformation. We use this technique to find the generating function for the number of $k$-ary words avoiding any vincular pattern that has only ones, as well as words cyclically avoiding vincular patterns with only ones whose runs of ones between dashes are all of equal length.
\end{abstract}
\maketitle\section{Introduction}

Consider the following simple problem.  How many arrangements of the word ``WALLAWALLA'' are there with no LLL, AAA or WW as consecutive subwords?  Perhaps surprisingly, the answer can be calculated by performing a certain integral.  In fact, there are \[\int_0^\infty e^{-t}\left(\frac{1}{24}t^4 - t^2 + t\right)^2  \left(\frac{1}{2}t^2 - t\right) \, dt = 1584\] such words, as we will see.  More generally, we will develop a technique by which the generating function for the number of words subject to various conditions can be found by integrating a product against $e^{-t}$, $t\geq 0$.  Since $\int_0^\infty e^{-t}t^n \, dt = n!$, this amounts to applying the linear functional $\Phi: \R[t] \mapsto \R$ given by $t^n \mapsto n!$.

Define a {\it word} $W$ on an alphabet $S$ to be an ordered list $s_1 \cdots s_n$ of letters $s_i \in S$. A subword of $W$ is a word $s_l s_{l+1} \cdots s_{l+m}$. Note that we require the indices in a subword to be consecutive, while some authors do not.  Define a {\itshape factorization} $\phi$ on $S$ to be an ordered list $(\phi_1)(\phi_2) \ldots (\phi_k)$ of non-empty words $\phi_1, \ldots, \phi_k$ using letters from $S$.  We call $\phi_1, \ldots, \phi_k$ the {\it factors} or {\it parts} of $\phi$.  We abuse notation slightly by identifying a word $W$ with the factorization $(W)$ in one part, and the empty word with the factorization having no parts, writing $\emptyset$ for both.  

Given a factorization $\phi$ on $S$ and $T \subseteq S$, let $\phi|_T$ be the factorization whose parts are the maximal subwords of the parts of $\phi$ that use only letters from $T$, the parts of $\phi|_T$ being ordered by their occurrence in $\phi$.  We call $\phi|_T$ the restriction of $\phi$ to $T$.  For example, if $S = \{a,b\}$ and $T = \{a\}$, then the restriction of the factorization $(aabba)(aab)(b)(aaa)$ to $T$ is $\phi|_T = (aa)(a)(aa)(aaa)$, while $\phi|_{\{b\}} = (bb)(b)(b)$.  If $\phi$ contains no letters from $T$, we define $\phi|_T$ to be the empty factorization.

The methods given here will allow us to count words where only certain restrictions can occur.  For example, consider the words $W$ so that for each $s \in S$, the factors of $W|_{\{s\}}$ are all length one. This is equivalent to requiring that $W$ have no adjacent, identical letters.  Such words are sometimes called {\it Carlitz}, after Leonard Carlitz.  Thus ``PEAR'' is Carlitz, but ``APPLE'' is not.

More generally, let $A_1$ and $A_2$ be two sets of factorization on disjoint alphabets $S_1$, $S_2$, respectively.  Let $S = S_1 \cup S_2$, and denote by $A_1* A_2$ the set of factorizations $\phi$ of words on $S$ so that $\phi|_{S_1} \in A_1$ and $\phi|_{S_2} \in A_2$.  For example, if $A_1 = \{\emptyset, (a),(a)(a),(a)(a)(a), \ldots\}$, $B_2 = \{\emptyset, (b),(b)(b),(b)(b)(b), \ldots\}$, then $A_1 * A_2$ is the set of factorizations on $\{a,b\}$ so that each factor is a Carlitz word.  Thus $(ab)(a)(a)(baba) \in A_1 * A_2$, but $(b)(aab)(ba) \notin A_1 *A_2$.

Perhaps surprisingly, the problem of counting words in $A_1*A_2$ can be reduced to multiplication of certain power series.  If $A$ is a set of factorizations, we define a weight $w$ on $A$ to be a polynomial-valued function on $A$ that obeys a certain combinatorial restriction.  Then we will define a power series $f_{A,w}(t)$, the associated {\it Laguerre series} for $A$, in terms of the generalized Laguerre polynomials with parameter $\alpha = -1$.  The key fact we will use is the rule (Theorem~\ref{key}) \[f_{A_1*A_2,w}(t) = f_{A_1,w}(t) \cdot f_{A_2,w}(t).\]  Furthermore, we will show in Proposition~\ref{phifact} that $\Phi(f_{A,w}(t))$ gives the weight of all factorizations in $A$ with one or no parts, which we identify with words. By applying $\Phi$ to a product of Laguerre series we may count a variety of sets of restricted words, especially when the restrictions are on the length of runs of particular letters.

In Section~3, we describe the transformation $T$ that turns certain ordinary generating functions into Laguerre series.  The transformation can be described in terms of the Laplace transform, and so can be easily implemented in mathematics software packages.  We use $T$ to determine the Laguerre series for different sets of factorizations $A$, and use them to derive formulas and generating functions which count words that obey various restrictions.

In particular, we can use this technique to analyze certain pattern avoidance problems. A vincular, or generalized, pattern is a pattern with dashes such as $13 \d 2$.  This is a generalization of classical permutation patterns where the dashes are used to indicate that the numbers on either side are not required to be adjacent, but all others are.  These patterns were first studied by Babson and Steingr\'{i}msson \cite{Steingrimsson}, who showed that many statistics of interest can be classified in terms of vincular patterns.  The term {\itshape vincular} itself was coined by Claesson in \cite{Claesson}, from the Latin {\it vinculare}, to bind.  Words avoiding vincular patterns are studied in \cite{Bernini,Burstein, Mansour2,Mansour1,Mansour5, Mansour6}.  In this paper we will study vincular patterns with all ones, such as $111\d 11$.  Although such patterns are useless in the context of permutations, where only the pattern $1$ can be contained, they are meaningful in the context of general words on the alphabet $\N$ where letters may be repeated.  Since all of our patterns will have only ones, we give the definition in this context.  A word $W = s_1 \cdots s_l$, with each $s_i$ in some alphabet $S$, contains a vincular pattern $\tau = 1^{m_1}\d\cdots\d 1^{m_n}$ if there is a subsequence of $W$ consisting of $m = m_1 + \ldots + m_n$ identical letters of which the first $m_1$ are consecutive, the next $m_2$ are consecutive, and so on.  For example, a word contains $111\d 11$ if it has five appearances of the same letter in the word so that the first three and the last two are consecutive.  Formally, we say that $W$ contains $\tau$ if there are indices $1 \leq i_1 < i_2 < \ldots < i_m \leq l$ with $s_{i_1} = \ldots = s_{i_m}$ and $i_{j+1} - i_j = 1$ for $j \neq m_1, m_1 + m_2, \ldots, m_1 + \ldots + m_{n-1}$.  Otherwise, we say that $W$ avoids $\tau$.    

In Section~$4$, we give a formula to calculate the generating function for the number of words avoiding any such vincular pattern with only ones.  This formula involves the use of the maps $T$ and $\Phi$, but these can be easily calculated.  For example, we can use Sage to compute the the generating function $\sum_W x^{\len(W)}$ where the sum is taken over all ternary words $W$ avoiding the pattern $11  \d 11$, where $\len(W)$ is the ${\it length}$ of $W$, the number of letters counting multiplicity:

\[\frac{6 \, x^{7} - 6 \, x^{6} + 6 \, x^{5} - 2 \, x^{4} - 5 \, x^{3} + 9 \, x^{2} - 5 \, x + 1}{16 \, x^{4} - 32 \, x^{3} + 24 \, x^{2} - 8 \, x + 1} = 1 + 3 x + 9 x^{2} + 27 x^{3} + 78 x^{4} + 222 x^{5}+ \ldots.\]

We  give a cyclic version of this result in Section~$5$ for the case of patterns $1^m \d 1^m \d \cdots \d 1^m$, where all runs of ones are the same length.  This gives the generating functions for words so that any cyclic permutation of their letters avoids such a pattern.  This generalizes a result of Burstein and Wilf \cite{Wilf} which gives the generating function for the number of words cyclically avoiding $1^m$.

\section*{Acknowledgments}
The author would like to thank Sara Billey, Ira Gessel, Silvia Heubach, Yannick van Huele, Toufik Mansour, Brendan Pawloski, Austin Roberts, Byron Schmuland and William Stein for their extraordinary support.  Without them this paper would still be ``floating in platonic heaven''.

\section{Laguerre polynomials and Laguerre series} 
Define the polynomials $l_k(t)$ by their generating function 
\begin{align}\label{laguerregf}
\sum_{k=0}^\infty l_k(t) x^k = e^{\frac{tx}{1+x}}.
\end{align} The first few such polynomials are
\begin{align*}
l_0(t)  &= 1\\
l_1(t) &= t\\
l_2(t) &= \frac{1}{2}t^2 - t\\
l_3(t) &= \frac{1}{6}t^3 - t^2 + t.
\end{align*}
These polynomials are a form of Laguerre polynomial.  Specifically, $l_k(t) = (-1)^k L_k^{(-1)}(t)$ where $$L_k^{(\alpha)}(t) = \sum_{i=0}^k (-1)^i \binom{k + \alpha }{k - i} \frac{t^i}{i!}$$ defines the generalized Laguerre polynomials. They have been found to have a number of interesting combinatorial properties, beginning with their use by Even and Gillis to count generalized derangements when $\alpha$ is set to $0$ in~\cite{Even}.  This was later extended by Foata and Zeilberger who use $\alpha$ to keep track of the number of cycles \cite{Doron3}.  For our purpose, we will take $\alpha = -1$.

Recall that a word on $[k] = \{1,2, \ldots, k\}$ is called $k$-ary, and a word in which no two adjacent letters are the same is called a Carlitz word.  In 1988, Ira M. Gessel \cite[Section 6]{Gessel} used his theory of generalized rook polynomials to find an explicit formula for the number of Carlitz arrangements of a given multiset of letters.  We present here an unlabeled version.

\begin{theorem}\label{fundamental}
Let $\Phi$ be the linear functional on polynomials in $t$ given by $\Phi(t^n) = n!$.  Given nonnegative integers $n_1, \ldots, n_k$, the number of $k$-ary Carlitz words with the letter $i$ used exactly $n_i$ times is \[\Phi\biggl(\prod_{i=1}^k l_{n_i}(t)\biggr).\]
\end{theorem}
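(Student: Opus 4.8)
The plan is to prove the identity by matching the coefficients produced by $\Phi$ against an inclusion--exclusion count, organized around the idea of a ``block word.'' First I would record the explicit monomial expansion of $l_{n_i}$. Extracting coefficients from $l_k(t) = (-1)^k L_k^{(-1)}(t)$ gives
\[
l_{n_i}(t) = \sum_{b=0}^{n_i} (-1)^{n_i-b}\binom{n_i-1}{b-1}\frac{t^b}{b!},
\]
so that the coefficient of $t^b/b!$ is, up to the sign $(-1)^{n_i-b}$, the number $\binom{n_i-1}{b-1}$ of compositions of $n_i$ into $b$ positive parts. I read this combinatorially as a way to break the $n_i$ copies of letter $i$ into $b$ ordered nonempty blocks.

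Next I would multiply the $k$ expansions together and apply $\Phi$. Since $\Phi(t^{b_1+\cdots+b_k}) = (b_1+\cdots+b_k)!$, the factors $\prod_i 1/b_i!$ combine with it into a multinomial coefficient, yielding
\[
\Phi\Bigl(\prod_{i=1}^k l_{n_i}(t)\Bigr) = \sum_{b_1,\ldots,b_k} (-1)^{n-B}\,\frac{B!}{\prod_i b_i!}\,\prod_i \binom{n_i-1}{b_i-1},
\]
where $n = \sum_i n_i$ and $B = \sum_i b_i$. I would then interpret each summand: the factor $B!/\prod_i b_i!$ counts arrangements of a sequence of $B$ colored blocks with $b_i$ blocks of color $i$, while $\prod_i \binom{n_i-1}{b_i-1}$ counts the assignments of positive sizes to the blocks of each color summing to $n_i$. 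Hence the right-hand side is a signed sum over all ``block words''---sequences of monochromatic runs using letter $i$ a total of $n_i$ times---each carrying the sign $(-1)^{n-B}$, where $B$ is the number of runs.

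Finally I would recognize this signed sum as inclusion--exclusion. A block word is exactly the data of an ordinary word $W$ (using letter $i$ exactly $n_i$ times) together with a choice of subset $S$ of the bonds joining equal adjacent letters, two letters lying in a common block precisely when they are linked through $S$; the number of chosen bonds is $n - B$, so the sign is $(-1)^{|S|}$. Summing $(-1)^{|S|}$ over all subsets $S$ of the equal-adjacent bonds of a fixed $W$ gives $0$ unless $W$ has no equal-adjacent bond at all, in which case it gives $1$. The surviving words are precisely the Carlitz words with the prescribed content, which proves the claim.

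I expect the main obstacle to be the bookkeeping in the block-word correspondence: one must verify that the multinomial and the product of binomials together enumerate the marked pairs $(W,S)$ with no over- or under-counting, and in particular that two adjacent runs of the same color (whose joining bond is left out of $S$) are permitted and are recorded correctly by the compositions. As a sanity check I would confirm the single-letter case, where $\Phi(l_n(t)) = \sum_j (-1)^j \binom{n-1}{j}$ equals $1$ for $n \in \{0,1\}$ and $0$ for $n \ge 2$, matching the count of one-letter Carlitz words. I would also note the slicker but less self-contained route through the generating function $\sum_k l_k(t)x^k = e^{tx/(1+x)}$: multiplying over colors and using $\Phi(e^{tz}) = (1-z)^{-1}$ produces $\bigl(1 - \sum_i x_i/(1+x_i)\bigr)^{-1}$, the classical generating function for words with no equal adjacent letters.
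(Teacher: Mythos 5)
Your proof is correct, but it is organized quite differently from the paper's. The paper deduces the theorem from Lemma~\ref{carlitzgf} (the Smirnov generating function $\bigl(1-\sum_i \frac{x_i}{1+x_i}\bigr)^{-1}$ for Carlitz words, proved there by the same merge/split cancellation you use): it applies $\Phi$ to $\prod_i \exp\bigl(\frac{tx_i}{1+x_i}\bigr)$ via the identity $\Phi(e^{tf}) = \frac{1}{1-f}$ and then extracts the coefficient of $x_1^{n_1}\cdots x_k^{n_k}$ --- which is precisely the ``slicker but less self-contained route'' you sketch in your closing remark. Your main argument instead stays at fixed content: expand each $l_{n_i}(t)$ explicitly into monomials, apply $\Phi$ termwise, recognize each summand as counting block words, and cancel by inclusion--exclusion over subsets of equal-adjacent bonds. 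The underlying cancellation (joining or separating adjacent equal letters, with a sign) is the same in both proofs, but yours is self-contained and purely finitary: it needs no power-series manipulations, no extension of $\Phi$ to infinite series, and no appeal to Smirnov's formula as a separate lemma. The cost is the multinomial bookkeeping you flag, which does check out: $\frac{B!}{\prod_i b_i!}\prod_i\binom{n_i-1}{b_i-1}$ enumerates block words exactly (adjacent same-color blocks included), a block word is the same data as a pair $(W,S)$ with $|S| = n - B$, and the alternating sum over bond subsets annihilates every non-Carlitz $W$. What the paper's route buys in exchange for being less elementary is Lemma~\ref{carlitzgf} as a standalone, reusable statement (it is invoked again in Section~3) and an argument that lives naturally in the weighted multivariate setting. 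One small point to pin down in yours: for $n_i = 0$ the coefficient $\binom{n_i-1}{b-1}$ must be read as the number of compositions of $0$ into $0$ parts (namely $1$), so that $l_0 = 1$ is recovered; with that convention your formula and the bijection handle the empty-letter case correctly.
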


For example, in \cite{Mississippi} the authors consider the ``Mississippi Problem''.  How many arrangements of the letters in the word ``MISSISSIPPI'' have no adjacent letters the same?  We can use the preceding theorem to calculate this directly.  There is one $M$, four $I$'s, four $S$'s, and two $P$'s.  So the solution is \[\Phi(l_1(t)l_4(t)l_4(t)l_2(t)) = \int_0^\infty e^{-t} \, \Big( t \Big) \, \left( \frac{1}{24}t^4 - \frac{1}{2}t^3 + \frac{3}{2} t^2 - t\right)^2\left(\frac{1}{2}t^2 - t\right)\, dt = 2016.\]

Our proof of Theorem~\ref{fundamental} does not involve rook polynomials, but is based on the following surprisingly simple expression for the generating function for the number of Carlitz words with a given set of letters.

\begin{lemma}\label{carlitzgf}
Let $\mathcal{C}$ be the set of Carlitz words over the alphabet $\N$.  For a word $W = s_1\cdots s_n$ define its weight to be $w(W) = x_{s_1}\cdots x_{s_n}$, so that the exponent of $x_i$ records the number of occurrences of the letter $i$.  Then 
\begin{align*}\label{smirnov}
\sum_{W \in \mathcal{C}} w(W) = \frac{1}{1-\sum_i \frac{x_i}{1+x_i}}.
\end{align*}
\end{lemma}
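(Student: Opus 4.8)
The plan is to relate the Carlitz generating function to the (trivial) generating function for \emph{all} words over $\N$ by a change of variables that encodes runs of repeated letters. Write $F(x_1, x_2, \ldots) = \sum_{W \in \mathcal{C}} w(W)$ for the series we want. Since an arbitrary word is just a finite sequence of letters, each position contributing a factor $\sum_i x_i$, the generating function for all words (Carlitz or not) over $\N$ is the geometric series $\frac{1}{1 - \sum_i x_i}$, where the constant term $1$ accounts for the empty word.

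The key observation is run-length encoding. Every nonempty word over $\N$ factors uniquely as a concatenation of maximal runs $s_1^{k_1} s_2^{k_2} \cdots s_n^{k_n}$ with each $k_j \geq 1$ and $s_j \neq s_{j+1}$, and the word $s_1 s_2 \cdots s_n$ obtained by recording one letter per run is precisely a Carlitz word. Conversely, given a Carlitz word $s_1 \cdots s_n \in \mathcal{C}$ together with any choice of run lengths $k_j \geq 1$, inflating each letter $s_j$ into the run $s_j^{k_j}$ produces a word whose run decomposition is exactly the one we started from (the inequality $s_j \neq s_{j+1}$ guarantees adjacent runs do not merge). This is a bijection between arbitrary words and pairs consisting of a Carlitz word together with a positive run length at each of its positions.

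Translating the bijection into generating functions, inflating a letter $s_j$ contributes the factor $x_{s_j} + x_{s_j}^2 + \cdots = \frac{x_{s_j}}{1 - x_{s_j}}$ in place of $x_{s_j}$. Summing over all Carlitz words and all choices of run lengths therefore gives
\[
\frac{1}{1 - \sum_i x_i} = F\!\left(\frac{x_1}{1-x_1}, \frac{x_2}{1-x_2}, \ldots\right),
\]
an identity of formal power series. To recover $F$ I would substitute $x_i \mapsto \frac{x_i}{1 + x_i}$, which inverts $x_i \mapsto \frac{x_i}{1 - x_i}$; under this substitution the right-hand side collapses to $F(x_1, x_2, \ldots)$, while the left-hand side becomes $\frac{1}{1 - \sum_i \frac{x_i}{1 + x_i}}$, which is exactly the claimed formula.

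The routine bookkeeping here is minor; the step deserving care is the justification that these manipulations are legitimate for power series in infinitely many variables. For any fixed target monomial $\prod_i x_i^{m_i}$, only finitely many Carlitz words and run-length choices can contribute to its coefficient, so every coefficient is a finite sum and the substitutions and compositions above are well-defined coefficientwise (alternatively, one may restrict to a finite sub-alphabet and pass to the limit). The only genuine content is the uniqueness of the run-length encoding, which is what upgrades the inflation map to a true bijection rather than merely a surjection.
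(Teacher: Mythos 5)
Your proof is correct, but it is not the argument the paper gives. The paper proves the identity by expanding the right-hand side as $\sum_{n\geq 0}\bigl(\sum_i (x_i - x_i^2 + x_i^3 - \cdots)\bigr)^n$, interpreting each monomial as a signed way of building a word in steps (choose a letter, append some copies of it), and then constructing a sign-reversing involution: at the first adjacent repetition $ii$ of a non-Carlitz word, merge the two steps producing it or split the single step, so that all non-Carlitz contributions cancel and each Carlitz word survives exactly once with positive sign. You instead avoid signs entirely: the run-length (inflation) bijection between arbitrary words and pairs (Carlitz word, positive run lengths) gives the functional identity $\frac{1}{1-\sum_i x_i} = F\bigl(\frac{x_1}{1-x_1},\frac{x_2}{1-x_2},\ldots\bigr)$, which you then invert by the substitution $x_i \mapsto \frac{x_i}{1+x_i}$. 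This is the classical ``Smirnov substitution'' argument, and it buys something the paper's proof does not make explicit: the inflation principle itself, which the paper later invokes when it remarks that the $m$-Carlitz generating function can be obtained by substituting $x_i + x_i^2 + \cdots + x_i^{m-1}$ for $x_i$ in this lemma --- your proof explains exactly why such substitutions work. What the paper's involution buys in exchange is that it verifies the stated formula directly, with no need to justify inverting a substitution in infinitely many variables; your version does need the coefficientwise finiteness remark you make at the end (or the restriction to a finite sub-alphabet), and your handling of that point is adequate.
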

Lemma ~\ref{carlitzgf} can be found in the book by Goulden and Jackson \cite[2.4.16]{Goulden-Jackson2}. They attribute it MacMahon's book from 1915 \cite{MacMahon} and Smirnov, Sarmanov, and Zaharov in (1966) \cite{Smirnov}, and so counting Carlitz words is sometimes known as the Smirnov problem.  The case $x_i = x^i$ was found, apparently independently, by Carlitz in 1977 \cite{Carlitz}.  Asymptotics of Carlitz compositions are investigated using this formula by Knopfmacher and Prodinger in \cite{Knopfmacher}. This result can be generalized to the Carlitz-Scoville-Vaughan Theorem \cite{CSV}, a reciprocity result relating the words for which only a given set of pairs of letters are adjacent to the words where none of these pairs are adjacent. 

\begin{proof}[Proof of ~\ref{carlitzgf}]
We have
\begin{align*}
\frac{1}{1-\sum_i \frac{x_i}{1+x_i}} &= \sum_{n=0}^\infty \left(\sum_i \frac{x_i}{1+x_i}\right)^n\\
&= \sum_{n=0}^\infty \left(\sum_i \left( x_i -x_i^2 + x_i^3 - x_i^4 + \ldots\right)\right)^n .\\
\end{align*}
We see that this sum represents a method of choosing a word $W$. We choose a number of steps, $n$, and at each step we choose a letter $i$ and append any number of copies of it.  Suppose $W$ is not Carlitz, and write $W = W_1iiW_2$ where $W_1$ is Carlitz so that the subword $ii$ is the first adjacent repetition. If the $i$'s were chosen in separate steps we may merge the steps together, and if the $i$'s were chosen in the same step we may split the steps apart.  This gives a bijection between the ways $W$ may be formed in these two cases, and the monomials representing them in the power series above will appear with opposite sign. Thus any such $W$ will be canceled in the sum.  The remaining Carlitz words can only be represented in one way, with positive sign.
\end{proof}

Now we can give a simple proof of Gessel's formula.

\begin{proof}[Proof of Theorem~\ref{fundamental}]
Note that \[\Phi(e^{tf}) = \frac{1}{1-f}\] for any $f$ that is constant with respect to $t$ when both sides are defined, where $\Phi$ is extended in the natural way to some power series.  Then we see
\begin{align*}
\Phi\left(\prod_{i=1}^\infty \exp\left(\frac{tx_i}{1+x_i}\right)\right) &= \Phi\left( \exp\left(t\sum_{i=1}^\infty\frac{x_i}{1+x_i}\right)\right)\\
&= \frac{1}{1 - \sum_{i=1}^\infty \frac{x_i}{1+x_i}}\\
\end{align*}
and if we take the coefficient of $x_1^{k_1} x_2^{k_2} \cdots x_m^{k_m}$ on both sides we get the desired formula by Lemma ~\ref{carlitzgf} and the generating function \eqref{laguerregf}.
\end{proof}

Using Theorem~\ref{fundamental}, it is easy to see combinatorially that 

\begin{align}\label{orthogonality}
  \Phi(l_i(t) l_j(t))  = \left\{
     \begin{array}{ll}
       2 & \text{if $i = j$} \\
       1 & \text{if $|i-j| = 1$}\\
		 0 & \text{if $|i-j| > 1$}\\
     \end{array}
   \right.
\end{align} 

\noindent and so the polynomials $l_k(t)$ are ``almost'' orthogonal with respect to $\Phi$.  

Note that $l_k(t)$ is a polynomial of degree $k$; so the matrix of $l_k$'s expanded into powers of $t$ is triangular with no zeroes on the diagonal, and so $\{l_k\}_k$ forms a basis of $\R[t]$.  It is natural to ask, then, what is the expansion of $l_i(t) l_j(t)$ in this basis?  These are known as {\it linearization coefficients}.  The linearization coefficients of general Laguerre polynomials, with $\alpha$ indeterminate, is known \cite{ Doron3,zeng}, but we will need a combinatorial interpretation of the case $\alpha = -1$.

Denote by $n_{i,j,k}$ the number of factorizations over the alphabet $\{a,b\}$ with $k$ parts and exactly $i$ $a$'s and $j$ $b$'s so that each part is Carlitz.  For example, $n_{2,5,3} = 6$: the possibilities are $(bab)(bab)(b)$, $(babab)(b)(b)$ and the different permutations of these sets of factors.  

\begin{lemma}\label{linearization}
We have, for all $i,j \in \N$, \[l_i(t)l_j(t) = \sum_k n_{i,j,k}l_k(t).\]
\end{lemma}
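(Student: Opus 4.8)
The plan is to exploit the three-letter version of Theorem~\ref{fundamental} together with the ``almost orthogonality'' relation~\eqref{orthogonality}. Since $l_k$ has degree $k$, the family $\{l_k\}$ is a basis of $\R[t]$, so there are unique coefficients $c_{i,j,k}$ with $l_i(t)l_j(t) = \sum_k c_{i,j,k}\,l_k(t)$, and the lemma asserts $c_{i,j,k} = n_{i,j,k}$. Rather than reading off the $c_{i,j,k}$ directly (which the tridiagonal form of~\eqref{orthogonality} does not permit), I would first establish the auxiliary identity
\[ \Phi\bigl(l_i(t)\,l_j(t)\,l_m(t)\bigr) = \sum_k n_{i,j,k}\,\Phi\bigl(l_k(t)\,l_m(t)\bigr) \qquad \text{for every } m \geq 0, \]
and then deduce the lemma from it by a short linear-algebra argument.

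For the auxiliary identity I would argue combinatorially. By Theorem~\ref{fundamental} the left side counts Carlitz words over a three-letter alphabet $\{a,b,c\}$ using $a$ exactly $i$ times, $b$ exactly $j$ times, and $c$ exactly $m$ times. Given such a word $W$, restrict to $\{a,b\}$: the maximal runs of $\{a,b\}$-letters form a factorization $\phi = W|_{\{a,b\}}$ each of whose parts is Carlitz, say with $k$ parts, so $\phi$ is one of the $n_{i,j,k}$ objects. Contracting each such run to a single new letter $d$ produces a word over $\{d,c\}$ with $k$ copies of $d$ and $m$ copies of $c$; this word is again Carlitz, since two $c$'s cannot be adjacent in $W$ and two $d$'s cannot be adjacent (adjacent runs would not be maximal). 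By Theorem~\ref{fundamental} the number of such words is $\Phi(l_k(t)l_m(t))$. Conversely, from a factorization into $k$ Carlitz $\{a,b\}$-parts together with a Carlitz word over $\{d,c\}$ having $k$ copies of $d$ and $m$ copies of $c$, substituting the parts in order for the $d$'s reconstructs a Carlitz word over $\{a,b,c\}$ with the correct letter counts. This is a bijection, which establishes the auxiliary identity.

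To finish, set $P = l_i l_j - \sum_k n_{i,j,k}\,l_k = \sum_k d_k\,l_k$, where $d_k = c_{i,j,k} - n_{i,j,k}$ is a finitely supported sequence. The auxiliary identity says exactly that $\Phi(P \cdot l_m) = 0$ for all $m \geq 0$. Expanding via~\eqref{orthogonality} turns this into the second-order recurrence $d_{m-1} + 2 d_m + d_{m+1} = 0$ for $m \geq 1$, together with a boundary relation at $m = 0$. Its characteristic polynomial is $(x+1)^2$, so every solution has the form $d_m = (A + Bm)(-1)^m$; but $P$ is a polynomial, so $d_m = 0$ for all sufficiently large $m$, which is only possible if $A = B = 0$. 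Hence $d_m = 0$ for every $m$, i.e.\ $c_{i,j,k} = n_{i,j,k}$, proving the lemma.

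The main obstacle is that~\eqref{orthogonality} is tridiagonal rather than diagonal, so the coefficients cannot be extracted by a single application of $\Phi$; the real work lies in packaging the combinatorial bijection through the auxiliary three-letter identity and then discharging the resulting recurrence (double root $-1$) using the finiteness of the support of $P$. I would also take some care with the $m = 0$ boundary term, where $l_0 = 1$ makes $\Phi(l_0 l_0)$ behave slightly differently from the generic diagonal value.
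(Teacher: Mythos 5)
Your proposal is correct, and its combinatorial heart is exactly the paper's: you reduce the lemma to the auxiliary identity $\Phi(l_i l_j l_m) = \sum_k n_{i,j,k}\,\Phi(l_k l_m)$ and prove that identity by the same bijection (the paper substitutes the parts of $\phi$ for the occurrences of a placeholder letter in a Carlitz word on two letters, which is precisely your run-contraction map read backwards). Where you genuinely diverge is in the step that deduces $P=0$ from $\Phi(P\,l_m)=0$ for all $m$. The paper proves nondegeneracy of the pairing by expanding $P$ in powers of $t$: $\Phi(t^m P(t))=0$ for all $m$ gives $a_0 m! + a_1(m+1)! + \cdots + a_n(n+m)! = 0$, which is impossible for a nonzero polynomial because the factorial sequence grows superexponentially and hence satisfies no constant-coefficient linear recurrence. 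You instead work directly in the $l_k$-basis: the tridiagonal relation \eqref{orthogonality} turns $\Phi(P\,l_m)=0$ into $d_{m-1}+2d_m+d_{m+1}=0$ for $m\geq 1$, whose solutions $(A+Bm)(-1)^m$ cannot be finitely supported unless $A=B=0$. Both finishes are sound; yours is arguably more self-contained, since it uses only \eqref{orthogonality} (already available, and your restriction to $m\geq 1$ correctly sidesteps the fact that $\Phi(l_0 l_0)=1$ rather than $2$), while the paper's factorial-growth argument establishes the stronger, reusable fact that the bilinear form $(p,q)\mapsto\Phi(pq)$ is nondegenerate on all of $\R[t]$, independent of any particular basis. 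One small point you rely on implicitly: the sequence $d_k$ is finitely supported because $n_{i,j,k}=0$ for $k>i+j$ (a factorization of $i+j$ letters has at most $i+j$ parts), which is worth a sentence in a final write-up.
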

\begin{proof}
Note that if $p(t) = a_0 + a_1t + \ldots +a_nt^n$ is a polynomial and $\Phi(t^m p(t)) = 0$ for all $m$, then \[a_0m! +a_1(m+1)! + \ldots + a_n(n+m)! = 0.\] This is a homogenous linear recurrence relation with constant coefficients for the factorial sequence, which is impossible unless $a_0 = a_1 = \ldots = a_n = 0$ since it the factorial is superexponential.  Since $\{l_k(t)\}_k$ forms a basis for $\R[t]$, if $\Phi(p(t)l_k(t)) = 0$ for all $k$ then we can conclude $p(t) = 0$.  So it is enough to show that for any nonnegative integers $i,j,m$,

\[\Phi\left( l_i(t)l_j(t) l_m(t) \right) = \phi\left(\sum_k n_{i,j,k} l_k(t) l_m(t) \right).\]

We know that the left hand side counts the number of Carlitz arrangements of $i$ $a$'s, $j$ $b$'s, and $m$ $c$'s, while the right hand side gives the total number of pairs $(\phi, W)$ where $\phi$ is a factorization in $k$ parts with $i$ $a$'s and $j$ $b$'s with each part Carlitz, and $W$ is a Carlitz word with $k$ $x$'s and $m$ $c$'s. There is a simple bijection between these sets.  Given such a pair $(\phi, W)$, we can get a Carlitz arrangement of $i$ $a$'s, $j$ $b$'s and $m$ $c$'s by replacing the $i$th $x$ of $W$ with the $i$th part of $\phi$.  For example, if $\phi = (ab)(bab)$ and $W = cxcx$, we get the Carlitz word $cabcbab$.  This process is reversible:  given a Carlitz word on $a,b,c$ we replace the $c$'s by parentheses to make a factorization $\phi$ with only the letters $a$ and $b$, and to get $W$ we replace each maximal subword that does not contain $c$ by a single $x$, getting a word with only $c$'s and $x$'s.  For example, given the word $abcbcab$, we get the pair $W = xcxcx$, and $\phi = (ab)(b)(ab)$.  The maximality condition guarantees that $W$ will be Carlitz.\end{proof}

\begin{definition} Given a set of factorizations $A$ on an alphabet $S$, a {\it weight} is a function $w$ from $A$ and all of the restrictions of factorizations in $A$ into a polynomial ring $\R[x_1, x_2, \ldots]$ that commutes with restriction in the sense that if $\phi \in A$ and $T \subseteq S$, then $w(\phi) = w(\phi|_T) \, w(\phi|_{S\backslash T})$.
\end{definition}

Note that in particular, if $A = A_1 *A_2$ for some sets of factorizations $A_1,A_2$ then $w$ is also a weight on $A_1$ and $A_2$.  Also note that taking $T$ to be empty forces $w(\emptyset) = 1$.  Typically we will take the weight $w(\phi)$ to be a monomial $x_1^{n_1(\phi)} x_2^{n_2(\phi)} \cdots x_m^{n_m(\phi)}$ where each $n_i(\phi)$ is a statistic so that $x_i^{n_i(\phi)}$ is multiplicative in the above sense.  In all our examples it will be obvious that $w$ is a weight.  Examples include 

\begin{itemize}
\item $\len(\phi)$, the length of $\phi$
\item $\operatorname{sum}(\phi)$, the sum of the letters in $\phi$ if the symbols in $\phi$ are nonnegative integers
\item the number of distinct symbols in $\phi$
\item the number of appearances of a particular symbol
\end{itemize}

\noindent or simply $w=1$ if we wish to enumerate a finite set.  We write $\parts(\phi)$ for the number of parts of $\phi$; but $x^{\parts(\phi)}$ is in general {\it not} a weight.

\begin{definition} Let $A$ be a set of factorizations on an alphabet $S$ and $w$ be a weight on $A$.  Define the {\it Laguerre series} of $A$ with respect to $w$ to be the formal power series \[f_{A,w}(t) = \sum_{\phi \in A} w(\phi) l_{\parts(\phi)}(t)\] when this sum is well-defined as a formal power series.  For convenience we will omit the $w$ in the subscript when $w=1$, writing $f_{A,1}(t)$ as $f_A(t)$.
\end{definition}
Convergence of Laguerre series is studied in, e.g., \cite{Pollard, szasz, weniger}, where they are defined as series $$\sum_n \lambda_n^{(\alpha)}L_n^{(\alpha)}(t).$$  Our definition is slightly different, as we always take $\alpha = -1$ and our coefficients may be weighted.


\begin{proposition}\label{phifact}
Assume $A$ is a set of factorizations and $w$ is a weight on $A$.  Let $\Phi$ be the linear operator so that $\Phi(t^n) = n!$ and $\Phi$ fixes any other variables.  Then \[\Phi( f_{A,w}(t)) = \sum_W w(W)\] when both sides are defined, where the sum is over allowed words $W \in A$ which are factorizations with one or no parts.
\end{proposition}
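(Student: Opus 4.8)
The plan is to reduce everything to computing $\Phi(l_k(t))$ for each $k$. Applying $\Phi$ termwise to the definition $f_{A,w}(t) = \sum_{\phi \in A} w(\phi)\, l_{\parts(\phi)}(t)$ gives
\[\Phi(f_{A,w}(t)) = \sum_{\phi \in A} w(\phi)\, \Phi\bigl(l_{\parts(\phi)}(t)\bigr),\]
since $\Phi$ fixes the variables $x_i$ appearing in $w(\phi)$ and so passes through the coefficients. The whole statement then follows once we know which $l_k$ have nonzero image under $\Phi$.

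First I would compute $\Phi(l_k(t))$ from the generating function \eqref{laguerregf} all at once, rather than polynomial by polynomial. Applying $\Phi$ to both sides of $\sum_k l_k(t)\,x^k = e^{tx/(1+x)}$ and using the identity $\Phi(e^{tf}) = 1/(1-f)$ established in the proof of Theorem~\ref{fundamental}, with $f = x/(1+x)$ constant in $t$, I obtain
\[\sum_{k=0}^\infty \Phi(l_k(t))\, x^k = \frac{1}{1 - \frac{x}{1+x}} = 1 + x.\]
Reading off coefficients, $\Phi(l_0(t)) = \Phi(l_1(t)) = 1$ while $\Phi(l_k(t)) = 0$ for every $k \geq 2$. (This matches the explicit values $\Phi(l_2(t)) = \tfrac12\cdot 2! - 1! = 0$ and $\Phi(l_3(t)) = \tfrac16\cdot 3! - 2! + 1! = 0$.)

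With this in hand, the only surviving terms in $\sum_{\phi \in A} w(\phi)\,\Phi(l_{\parts(\phi)}(t))$ are those $\phi \in A$ with $\parts(\phi) \in \{0,1\}$, each contributing exactly $w(\phi)$. A factorization with one part is a word $W$ (identified with $(W)$), and the unique factorization with zero parts is the empty word $\emptyset$; under the conventions of the excerpt both are among the factorizations ``with one or no parts'' and are precisely what $\sum_W w(W)$ ranges over. Hence $\Phi(f_{A,w}(t)) = \sum_W w(W)$, as claimed.

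The only delicate point is the interchange of $\Phi$ with the possibly infinite sum defining $f_{A,w}(t)$, together with the convergence of the generating-function manipulation used to evaluate $\Phi(l_k(t))$. This is exactly what the hypothesis ``when both sides are defined'' is meant to absorb: wherever $f_{A,w}(t)$ and $\sum_W w(W)$ make sense as formal power series in the $x_i$, the termwise application of $\Phi$ is legitimate because $\Phi$ acts coefficientwise on each $x_i$-graded piece, and within a fixed multidegree only finitely many factorizations $\phi$ contribute. I expect this bookkeeping to be the main thing to state carefully, while the algebraic heart of the argument is the one-line evaluation $\sum_k \Phi(l_k(t))x^k = 1+x$.
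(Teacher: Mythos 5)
Your proof is correct, and it shares its skeleton with the paper's: both apply $\Phi$ termwise to $f_{A,w}(t)$, reducing everything to the values $\Phi(l_{\parts(\phi)}(t))$. The genuine difference is how those values are obtained. The paper cites the near-orthogonality relation \eqref{orthogonality} (together with $l_0(t)=1$ and $l_1(t)=t$), a relation it had derived \emph{combinatorially} from Theorem~\ref{fundamental}: taking one of the two polynomials to be $l_0 = 1$, the quantity $\Phi(l_k(t))$ counts Carlitz words consisting of $k$ copies of a single letter, and there are none once $k \geq 2$. You instead compute all the values at once, applying the identity $\Phi(e^{tf}) = 1/(1-f)$ to the generating function \eqref{laguerregf} to get $\sum_k \Phi(l_k(t))\,x^k = 1+x$. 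Your route is more self-contained: it needs neither Theorem~\ref{fundamental} nor Lemma~\ref{carlitzgf} nor any Carlitz-word interpretation, only the definition of the $l_k$ and the exponential identity already noted in the paper, so the proposition could be proved before any of the combinatorial machinery is set up. What the paper's choice buys is economy within the larger development --- the orthogonality relation is already on record and is needed again in spirit for the proof of Theorem~\ref{cyclicvinc} --- and it keeps visible the combinatorial reason for the vanishing $\Phi(l_k(t)) = 0$ for $k \geq 2$, namely that a block of $k$ identical letters is never Carlitz. Your closing discussion of when the termwise application of $\Phi$ is legitimate is at the same level of rigor as the paper, which performs the identical interchange without comment under the same ``when both sides are defined'' proviso.
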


\begin{proof}
We have
\begin{align*}
\Phi(f_{A,w}(t)) &= \sum_{\phi \in A} w(\phi) \Phi( l_{\parts(\phi)}(t))\\
\end{align*}
and $\Phi( l_{\parts(\phi)}(t))$ is $1$ when $\phi$ has one or no parts and is $0$ otherwise by~\eqref{orthogonality} since $l_0(t) = 1$ and $l_1(t)= t$.
\noindent 
\end{proof}

It is easy to see that if $A_1$ and $A_2$ are disjoint sets of allowed factorizations on a common alphabet $S$, $w$ is a weight on $A_1 \cup A_2$, then \[f_{A_1 \cup A_2,w}(t) = f_{A_1,w}(t) + f_{A_2,w}(t).\]  More interesting, perhaps, is the combinatorial interpretation of a product of Laguerre series.  This is our main theorem on the combinatorial properties of Laguerre series.   Recall that if $A_1,A_2$ are sets of factorizations on disjoint alphabets $S_1,S_2$, then $A_1* A_2$ was defined to be the set of factorizations $\phi$ of words on $S = S_1 \cup S_2$ so that $\phi|_{S_1} \in A_1$ and $\phi|_{S_2} \in A_2$. 

\begin{theorem} \label{key} 
Let $S_1$ and $S_2$ be disjoint alphabets with sets of allowed factorizations $A_1$,$A_2$ respectively, and let $w$ be a weight on $A_1 * A_2$ (and hence on $A_1$ and $A_2$.)  Then \[f_{A_1 * A_2,w}(t) = f_{A_1,w}(t)\cdot f_{A_2,w}(t).\]
\end{theorem}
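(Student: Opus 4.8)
The plan is to expand the right-hand side, apply the linearization formula of Lemma~\ref{linearization} term by term, and then reinterpret the resulting coefficients combinatorially as a sum over $A_1 * A_2$. Writing out the definitions,
\[
f_{A_1,w}(t)\cdot f_{A_2,w}(t) = \sum_{\phi_1 \in A_1} \sum_{\phi_2 \in A_2} w(\phi_1)\,w(\phi_2)\, l_{\parts(\phi_1)}(t)\, l_{\parts(\phi_2)}(t),
\]
and Lemma~\ref{linearization} rewrites each factor $l_i(t)\,l_j(t)$, with $i = \parts(\phi_1)$ and $j = \parts(\phi_2)$, as $\sum_k n_{i,j,k}\, l_k(t)$. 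So I would aim to show that, after collecting the coefficient of $l_k(t)$, the accumulated weight equals $\sum_{\phi} w(\phi)$ taken over those $\phi \in A_1 * A_2$ with $\parts(\phi) = k$.

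The heart of the argument is a bijection. Recall that $n_{i,j,k}$ counts factorizations $\psi$ over $\{a,b\}$ with $k$ parts, $i$ letters $a$, $j$ letters $b$, and every part Carlitz. I would use such a $\psi$ as a template prescribing how to interleave the $i$ parts of $\phi_1$ with the $j$ parts of $\phi_2$ into a single factorization on $S_1 \cup S_2$: replace the $m$-th occurrence of $a$ in $\psi$ (in reading order) by the $m$-th part of $\phi_1$, and the $m$-th occurrence of $b$ by the $m$-th part of $\phi_2$. This produces a factorization $\phi$ with $k$ parts, and I claim the assignment $(\phi_1, \phi_2, \psi) \mapsto \phi$ is a bijection onto $\{\phi \in A_1 * A_2 : \parts(\phi) = k\}$. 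The inverse takes $\phi_1 = \phi|_{S_1}$ and $\phi_2 = \phi|_{S_2}$, and recovers $\psi$ by replacing each maximal $S_1$-subword inside a part of $\phi$ by $a$ and each maximal $S_2$-subword by $b$.

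The step I expect to be the main obstacle is checking that this map really lands in $A_1 * A_2$ with the correct restrictions, and here the Carlitz condition on $\psi$ is exactly what is needed. Because no part of $\psi$ has two equal adjacent letters, within each part the substituted $S_1$- and $S_2$-blocks alternate, so each inserted part of $\phi_1$ remains a maximal subword over $S_1$, and likewise for $\phi_2$; consequently $\phi|_{S_1} = \phi_1 \in A_1$ and $\phi|_{S_2} = \phi_2 \in A_2$, so $\phi \in A_1 * A_2$. The reading order of the $a$'s (resp. $b$'s) guarantees the parts emerge in the right order, and in the reverse direction the alternation of maximal subwords inside each part is precisely what makes the reconstructed $\psi$ Carlitz. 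I would also note that parts are non-empty throughout, so $\psi$ is a genuine factorization.

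Finally, since $w$ is a weight it factors as $w(\phi) = w(\phi|_{S_1})\,w(\phi|_{S_2}) = w(\phi_1)\,w(\phi_2)$, so the weights match across the bijection. Summing the identity $l_{\parts(\phi_1)}(t)\, l_{\parts(\phi_2)}(t) = \sum_k n_{\parts(\phi_1),\parts(\phi_2),k}\, l_k(t)$ against $w(\phi_1)\,w(\phi_2)$ and comparing the coefficient of each $l_k(t)$ via the bijection then yields
\[
f_{A_1,w}(t)\, f_{A_2,w}(t) = \sum_{\phi \in A_1 * A_2} w(\phi)\, l_{\parts(\phi)}(t) = f_{A_1 * A_2,w}(t).
\]
A small additional point to address is that all three series are well-defined as formal power series, which holds under the standing assumption that the Laguerre series in question converge in that sense.
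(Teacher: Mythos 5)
Your proposal is correct and takes essentially the same route as the paper's own proof: both expand the product via Lemma~\ref{linearization} and then establish the identical substitution bijection, matching triples $(\phi_1,\phi_2,\psi)$ (where $\psi$ is a Carlitz template over $\{a,b\}$) with factorizations in $A_1 * A_2$ having $k$ parts, with the inverse given by restriction together with collapsing maximal $S_1$- and $S_2$-subwords to single letters. If anything, your explicit check that the Carlitz condition on $\psi$ forces the substituted blocks to remain maximal (so the restrictions really are $\phi_1$ and $\phi_2$), and your explicit appeal to the multiplicativity of the weight, spell out details the paper leaves implicit.
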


\begin{proof}
By Lemma 2.3,
\begin{align*}
 f_{A_1,w}(t)\cdot f_{A_2,w}(t) &= \sum_{\phi_1 \in A_1} \sum_{\phi_2 \in A_2} w(\phi_1) w(\phi_2) l_{\parts(\phi_1)}(t) l_{\parts(\phi_1)}(t)\\
 &= \sum_{\phi_1 \in A_1, \phi_2 \in A_2, k\geq 0} n_{\parts(\phi_1), \parts(\phi_2),k} w(\phi_1) w(\phi_2) l_k(t).
\end{align*}

Fix $\phi_1 \in A_1, \phi_2 \in A_2$.  It is enough to show that $n_{\parts(\phi_1), \parts(\phi_2),k}$ is the number of factorizations with $k$ parts on $S_1 \cup S_2$ whose restrictions to $S_1$ and $S_2$ are $\phi_1$ and $\phi_2$, respectively.  Then each allowed word of $A_1 * A_2$ will then be represented exactly once in the series $f_{A_1,w}(t)\cdot f_{A_2,w}(t)$, giving \[f_{A_1 * A_2,w}(t) = \sum_{\phi \in A_1 * A_2} w(\phi) l_{\parts(\phi)} =  f_{A_1,w}(t)\cdot f_{A_2,w}(t).\]
 
For fixed $k$, we will construct a simple bijection from the set of triples $(\phi,\phi_1, \phi_2)$ where $\phi_1$, $\phi_2$ are factorizations in $A_1,A_2$ respectively and $\phi$ is a factorization on the alphabet $\{a,b\}$ with $\parts(\phi_1)$ $a$'s and $\parts(\phi_1)$ $b$'s so that each part is Carlitz, and the set of factorizations $\phi_3$ of $A_1 * A_2$ with $k$ parts.  Let $\phi_3$ be the factorization created by replacing the $n$th $a$ in $\phi$ with the $n$th part of $\phi_1$, and the $n$th $b$ with the $n$th part of $\phi_2$.  Then by construction $\phi_3 \in A_1 * A_2$: its restrictions are $\phi_1$ and $\phi_2$.  Furthermore, given an allowed factorization $\phi_3 \in A_1 * A_2$ with $k$ parts so that $\phi_{S_1} = \phi_1$, $\phi_{S_2} = \phi_2$, we can reconstruct the factorization $\phi$ of a word on $\{a,b\}$ by replacing each subword of a factor of $\phi_3$ that uses only the letters of $S_1$, and is maximal with respect to this condition, by an $a$ and each maximal subword using only letters of $S_2$ by a $b$.  For example, if $S_1 = \{1,2\}$ and $S_2 = \{3,4\}$, with $\phi_3 = (123,2213,34413)$, we get the word $\phi = (ab,ab,bab)$.  No part of $\phi$ can have $aa$ or $bb$ by the maximality condition.  These two algorithms are inverse to each other, establishing the theorem.\end{proof}

Inductively, we see that if $A_1, \ldots, A_n$ are sets of factorizations on disjoint alphabets and $w$ is a weight on $A_1 * \cdots * A_n$, then $f_{A_1 *\cdots* A_n,w}(t) = f_{A_1,w}(t)\cdots f_{A_n,w}(t)$.  Thus we have the following extension of Lemma~\ref{linearization}:

\begin{corollary}
Let $S = \{a_1, \ldots, a_m\}$ be an alphabet of $m$ distinct letters, and $r_1, \ldots, r_m$ be fixed nonnegative integers.  Let $n_k$ denote the number of factorizations on $n$ letters in $k$ parts so that each part is Carlitz, and the $i$th letter $a_i$ of $S$ is used exactly $r_i$ times for each $i$.  Then \[\prod_{i=1}^m l_{r_i}(x) = \sum_{k\geq 0} n_k l_k(x).\]
\end{corollary}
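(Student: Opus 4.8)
The plan is to realize $\prod_{i=1}^m l_{r_i}(x)$ as a product of Laguerre series of single-letter factorization sets and invoke the inductive form of Theorem~\ref{key} stated just above. First I would, for each $i$, let $A_i$ be the set of Carlitz factorizations on the one-letter alphabet $\{a_i\}$ in which $a_i$ occurs exactly $r_i$ times, equipped with the trivial weight $w = 1$. Because any Carlitz word on a single letter has length one, the only member of $A_i$ is the factorization $(a_i)(a_i)\cdots(a_i)$ with $r_i$ singleton parts; hence $A_i$ is a one-element set whose unique factorization has $r_i$ parts, and so $f_{A_i}(x) = l_{r_i}(x)$.

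Next I would apply the inductive extension of Theorem~\ref{key} (the alphabets $\{a_1\},\ldots,\{a_m\}$ are pairwise disjoint), which gives
\[ f_{A_1 * \cdots * A_m}(x) = \prod_{i=1}^m f_{A_i}(x) = \prod_{i=1}^m l_{r_i}(x). \]
All sums here are finite, since $r_1 + \cdots + r_m$ bounds the number of parts, so there is no question of convergence and everything is a genuine polynomial identity.

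It then remains to identify $A_1 * \cdots * A_m$ combinatorially. By definition a factorization $\phi$ on $S$ lies in $A_1 * \cdots * A_m$ iff $\phi|_{\{a_i\}} \in A_i$ for every $i$; that is, iff every maximal run of $a_i$ inside a part of $\phi$ consists of a single $a_i$, and there are exactly $r_i$ such runs. Requiring this for all $i$ simultaneously is equivalent to requiring that no part of $\phi$ contain two equal adjacent letters---i.e. that each part of $\phi$ be Carlitz---with $a_i$ appearing exactly $r_i$ times. Grading by the number of parts, the number of such $\phi$ with $k$ parts is precisely $n_k$, so $f_{A_1 * \cdots * A_m}(x) = \sum_{k \ge 0} n_k\, l_k(x)$. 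Comparing with the previous display yields the claimed identity.

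The only step requiring genuine care is the combinatorial identification in the last paragraph: one must check that imposing the Carlitz (singleton-run) condition on each single-letter restriction $\phi|_{\{a_i\}}$ is exactly the same as imposing the Carlitz condition on each whole part of $\phi$. This is elementary---an adjacent repetition within a part is a repetition of some single letter $a_i$, and conversely---but it is the heart of why the key theorem specializes to the linearization statement, and it is worth spelling out explicitly.
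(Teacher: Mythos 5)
Your proposal is correct and matches the paper's own proof: the paper likewise takes each $A_i$ to be the singleton set $\{(a_i)(a_i)\cdots(a_i)\}$ with $r_i$ parts, applies Theorem~\ref{key} inductively, and identifies $A_1 * \cdots * A_m$ as the factorizations with Carlitz parts and $r_i$ occurrences of each $a_i$. Your last paragraph simply spells out the restriction-to-Carlitz equivalence that the paper leaves implicit, which is a reasonable addition but not a departure in method.
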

\begin{proof}
Let $A_i$ consist only of the factorization $(a_i)(a_i) \cdots (a_i)$, with exactly $r_i$ factors.  Then $A_1 * \cdots * A_m$ consists of those factorizations for which each part is Carlitz and the letter $a_i$ is repeated $n_i$ times so we may apply Theorem~\ref{key}.  Alternately, we can adapt the proof of Lemma~\ref{linearization}.
\end{proof}

\section{Computing Laguerre series}

The Laguerre series for a set of factorizations would not be especially useful if it was difficult to compute.  Fortunately, there is an efficient method to calculate them in some situations.  It may be difficult to find a convenient formula for the coefficients of $l_k(t)$ in a given Laguerre series $f_{A,w}(t)$, but this is not needed to find an expression for $f_{A,w}(t)$.  It is enough to find the ordinary generating function.  Specifically, we define $$g_{A,w}(u) = \sum_{ \phi \in A} w(\phi) u^k.$$  If a nice form of $g_{A,w}(u)$ is known, we may obtain the Laguerre series $f_{A,w}(t)$ by applying the linear transformation $T$ that sends $u^k$ to $l_k(t)$.  As it happens, $T$ can be computed in many situations using the inverse Laplace transform.  We have \[ \mathcal{L}\{l_k(t)\} = \frac{1}{s(1-s)} \left( \frac{1-s}{s}\right)^k\] for $k\geq 1$, where $\mathcal{L}$ is the Laplace transform; this is easily proved from the formula for $l_k(t)$ in terms of the generalized Laguerre polynomials, the fact that $\mathcal{L}\{ t^i\} = \frac{i!}{s^{i+1}}$, and the binomial theorem.
 
Therefore, if $g_{A,w}(0) = 0$, we have by linearity \[ T\{g_{A,w}(u)\} = f_{A,w}(t) = \mathcal{L}^{-1}\left\{ \frac{g_{A,w}(\frac{1-s}{s})}{s(1-s)}\right\}\] when the right-hand side is well-defined.  If $g_{A,w}(0) \neq 0$, we can calculate  \[ f_{A,w}(t) = T\{g_{A,w}(u) - g_{A,w}(0)\} + g_{A,w}(0)  =  \mathcal{L}^{-1}\left\{ \frac{g_{A,w}(\frac{1-s}{s}) - g_{A,w}(0)}{s(1-s)}\right\} + g_{A,w}(0)\] since $l_0(t) = 1$.  The use of the inverse Laplace transform here is not central to the theory, but it is convenient since many software packages provide symbolic calculation of the inverse Laplace transform, making it easy to implement the transformation $T$.

In many cases, we can use standard generating function techniques to calculate $g_{A,w}(u)$.  For example, let $A$ be the set of all factorizations on a single-letter alphabet, and let $w$ be the weight $w(\phi)= x^{\len(\phi)}$.  Then we see that \[g_{A,w}(u) = \sum_{k=0}^\infty u^k(x+x^2 + \ldots)^k\] since we choose a factorization $\phi$ by choosing the number of parts and then choosing the size of each part.  We may simplify this expression to get $g_{A,w}(u) = \frac{1+x}{1 + x - ux},$ and calculate  $f_{A,w}(t) = T\{g_{A,w}(u)\} = e^{tx}$.  Taking the coefficient of $x^k$ shows that the Laguerre series for all factorizations on this single-letter alphabet with length $k$ is $\frac{t^k}{k!}$.  Thus Proposition~\ref{phifact} and Theorem~\ref{key} tell us that \[\Phi\left(\prod_{k=1}^n \sum_{i=0}^{r_k} \frac{t^i}{i!}\right)\] is the total number of words on an alphabet $S = \{c_1, \ldots, c_n\}$ so that $c_k$ is used at most $r_k$ times, where $r_1, \ldots, r_n$ is a set of nonnegative integers.  Of course, this formula can also be seen directly.

Laguerre series are sometimes useful for counting words with certain subwords prohibited.  Consider the following problem: given a word $W$ and a subword $W'$ of $W$, how many words are there avoiding $W'$ as a subword that can be made from $W$ using each symbol at most as many times as it appears in $W$?   To answer this, we form the set $A_1$ of factorizations that can be made from $W$ that use only letters from $W'$ and do not contain $W'$ as a subword of any factor.  Letting $A_2$ be the set of all factorizations of words that can be made from $W$ in the above sense that do not use any letters from $W'$, we see that $A_1 * A_2$ is the set of all factorizations of words that can be made from $W$ so that no part contains $W'$ as a subword.  Setting $w=1$, we may perform a brute force calculation to compute the Laguerre series $f_{A_1}(t)$.  Since there are no subword restrictions on the remaining letters, we have \[f_{A_2}(t) = \prod_c \sum_{i=1}^{n_c} \frac{t^i}{i!}\] where the product is over the letters $c$ used in $W$ that are not used in $W'$, and $n_c$ is the number of times $c$ is used in $W$.  So the solution is given by $\Phi(f_{A_1}(t)f_{A_2}(t))$.

This method is practical if there are not too many letters used in the $W'$, and the number of times these letters are repeated in $W$ is small.  For example, how many words are there that can be made from the letters in ``CONSTANTINOPLE'' that avoids the subword ``TNT''?  We focus on the letters $T$ and $N$, forming the set $A_1$ consisting of those factorization of words using only these letters, with three or fewer $N$'s and two or fewer $T$'s, so that no factor contains ``TNT''.  We then perform a brute force count using a programming language such as Sage to explicitly calculate the Laguerre series for $A_1$ with respect to $w=1$, getting \[f_{A_1}(t) = \frac{1}{12}t^5 + \frac{5}{12}t^4 + \frac{2}{3} t^3 + t^2 + t + 1.\]  Then forming $A_2$ as above, the set of factorizations of any words made from the remaining letters ``COSAIOPLE'', we calculate $f_{A_2}(t) = (t + 1)^7(\frac{1}{2}t^2 + t +1)$ since all of the letters appear only once, except O which appears twice.  Therefore, the answer is \[\int_0^\infty e^{-t} \left(\frac{1}{12}t^5 + \frac{5}{12}t^4 + \frac{2}{3} t^3 + t^2 + t + 1\right)\left(t +1\right)^7\left(\frac{1}{2}t^2 +t +1\right)\, dt = 9{,}854{,}474{,}467.\]

A more general method for the subword avoidance problem is the cluster method developed by Goulden and Jackson \cite{Goulden-Jackson1,Goulden-Jackson2}.  This a powerful tool using linear algebra and inclusion-exclusion to find rational generating functions for words avoiding any given set of subwords. Noonan and Zeilberger \cite{Doron1} give generalizations and a good introduction to the idea; Edlin and Zeilberger also give an extension to cyclic words \cite{Doron2}.

Now consider the problem of counting words that have no subword consisting of $m$ identical letters.  These are words that avoid the subword pattern $1^m$, and are sometimes called $m$-Carlitz words; when $m=2$ we have the ordinary Carlitz words.  To find the generating function, let $A$ be the set of factorizations on a one-letter alphabet with each part having length smaller than $m$, and again let $w(\phi)= x^{\len(\phi)}$.   We see that \[g_{A,w}(u) = \sum_{n = 0}^\infty u^n(x + \ldots + x^{m-1})^n =  \frac{1 - x}{1 -x - u(x - x^m)}\] and so we compute 
\begin{align}\label{lacefunforsmallruns}
f_{A,w}(t) = T\{g_{A,w}(u)\} = \exp\left(t \cdot\frac{x - x^m}{1 - x^m}\right).
\end{align} 

Taking the coefficient of $x^n$ in~\eqref{lacefunforsmallruns} gives the Laguerre series for the set of factorizations with length $n$ so that each part is smaller than $m$.  This gives a generalization of Theorem~\ref{fundamental}.  If $m_1, \ldots, m_k, n_1, \ldots, n_k$ are nonnegative integers, and $p_{m,n}(t)$ are polynomials defined by $\sum_{n=0}^\infty p_{m,n}(t) \,x^n = \exp{\left( \frac{t(x - x^m)}{1- x^m}\right)}$, we see that \[\Phi\left( \prod_{i =1}^k p_{m_i,n_i}(t)\right)\] is the total number of $k$-ary words that use the letter $i$ exactly $n_i$ times and do not contain the subwords $i^{m_i}$. Thus the number of arrangements of the word ``WALLAWALLA'' with no LLL, AAA or WW as consecutive subwords is \begin{align*}
\int_0^\infty e^{-t} p_{3,4}(t)\cdot p_{3,4}(t) \cdot p_{2,2}(t)\, dt &= \int_0^\infty e^{-t}\left(\frac{1}{24}t^4 - t^2 + t\right)^2 \cdot \left(\frac{1}{2}t^2 - t\right) \, dt\\
&= 1584.
\end{align*}

Recalling again the formula \[\Phi(e^{tf}) = \frac{1}{1-f},\]\label{phirule}we see that the generating function for the number of $k$-ary $m$-Carlitz words of length $n$ is given by \[\Phi\left(\exp\left( kt \cdot\frac{ x - x^m}{1-x^m} \right)\right) = \frac{1 - x^m}{1 - kx - (k-1)x^m}.\]  Another derivation of this formula is given by Burstein and Mansour \cite[Example~2.2]{Mansour2}.

We may set indeterminates to count the number of occurrences of each symbol. Let $w(\phi) = x_1^{i_1} x_2^{i_2} \cdots x_k^{i_k}$, where $i_j$ is the number of appearances of $j$ in a factorization $\phi$ on the alphabet $[k]$. By~\eqref{lacefunforsmallruns} and~\eqref{phirule}, we see that \begin{align*}\displaystyle\Phi\left( \prod_{i=1}^k  \exp\left(t \cdot\frac{x_i - x_i^m}{1-x_i^m}\right) \right) = \frac{1}{1 - \sum_{i=1}^k \frac{x_i - x_i^m}{1 - x_i^m}}\end{align*} gives the sum of all weights in $m$-Carlitz $k$-ary words.  This can also be found by substituting $x_i + x_i^2 + \ldots +x_i^{m-1}$ for $x_i$ in Lemma~\ref{carlitzgf}.

\section{Vincular patterns}

We are now ready to state a general formula for $k$-ary words avoiding vincular patterns with ones.  We say that a $k$-ary factorization $\phi$ contains a vincular pattern $\tau$ with only ones if the word made from $\phi$ by inserting a single $0$ between each pair of adjacent factors contains $\tau$, and this copy of $\tau$ does not use $0$.  Using the transformations $T$ and $\Phi$, we can reduce the problem to finding ordinary generating functions for factorizations that only use one symbol and avoid the given vincular pattern.

\begin{theorem}{\label{vincgf}}  Let $m_1, \ldots, m_n$ be positive integers, and let $A$ be the set of $k$-ary words avoiding the pattern $\tau = 1^{m_1} \d 1^{m_2} \d \cdots \d 1^{m_n}$.  Then \[ \sum_{a_1 \cdots a_l \in A} x_{a_1} x_{a_2} \cdots x_{a_l} =  \Phi\left(   \prod_{i=1}^k\left[ e^{tx_i} - T\left\{G_\tau(x_i, u)\right\}\right]\right)\] where $T$ is the operator defined in Section 3, and 
\begin{align}\label{G}
G_\tau(x,u) =\frac{ ux^{m_1}(1-x) }{(1-x - u(x-x^{m_i}))(1-x- ux)} \prod_{i=2}^n \left[ x^{m_i} + \frac{ux^{m_i}(1 - x^{m_i})}{1-x - u(x-x^{m_i})}  \right].
\end{align}
\end{theorem}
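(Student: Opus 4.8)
The plan is to peel the problem apart one letter at a time using Theorem~\ref{key} and Proposition~\ref{phifact}, and then to compute a single-letter ordinary generating function by marking a canonical occurrence of $\tau$. For the first step, note that since $\tau$ uses only ones, any occurrence of $\tau$ in a word $W$ consists of positions carrying a single letter value $c$, and each block of $m_j$ consecutive equal letters of such an occurrence lies inside one maximal run of $c$, hence inside a single part of the restriction $W|_{\{c\}}$. Comparing with the definition preceding the theorem (insert a $0$ between adjacent parts), $W$ avoids $\tau$ if and only if every restriction $W|_{\{i\}}$ avoids $\tau$ as a factorization. Letting $A_i$ be the factorizations on the one-letter alphabet $\{i\}$ that avoid $\tau$, the words in $A_1 * \cdots * A_k$ are exactly the $k$-ary words avoiding $\tau$; so with the monomial weight $w(\phi)=\prod_i x_i^{n_i(\phi)}$ (where $n_i$ counts the $i$'s), the iterated form of Theorem~\ref{key} together with Proposition~\ref{phifact} give
\[ \sum_{a_1\cdots a_l\in A} x_{a_1}\cdots x_{a_l} = \Phi\!\left(\prod_{i=1}^k f_{A_i,w}(t)\right). \]

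Next I would identify each factor. Let $B_i$ be the set of all factorizations on $\{i\}$ and $C_i=B_i\setminus A_i$ those that contain $\tau$. Section~3 computes $f_{B_i,w}(t)=e^{tx_i}$, so by linearity $f_{A_i,w}(t)=e^{tx_i}-T\{g_{C_i,w}(u)\}$, where $g_{C_i,w}(u)=\sum_{\phi\in C_i}x_i^{\len(\phi)}u^{\parts(\phi)}$. Thus the whole theorem reduces to the single assertion that, writing $x=x_i$, the function $G_\tau(x,u)$ equals the ordinary generating function $\sum_{\phi\in C_i} x^{\len(\phi)}u^{\parts(\phi)}$ for single-letter factorizations that contain $\tau$, each counted once; note $G_\tau(x,0)=0$, so $T$ applies with no correction term.

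To prove that last identity I would mark in each $\tau$-containing factorization its greedy-leftmost occurrence (block $1$ fills the first $m_1$ cells of the first part of length $\ge m_1$, and each later block is placed as early as possible after its predecessor) and decompose the factorization from left to right. Parts preceding block $1$ have length $<m_1$, contributing $P_1(u)=\frac{1-x}{1-x-u(x-x^{m_1})}$; block $1$ opens a part, contributing $ux^{m_1}$; and everything after is governed by the remaining pattern. When $n=1$ the tail is unrestricted, contributing the free end $\frac{1}{1-x}$ of block $1$'s part together with the total generating function $Q(u)=\frac{1-x}{1-x-ux}$, so the first factor is $\frac{ux^{m_1}}{1-x}P_1Q=Q-P_1$, matching the stated one. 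Passing from $1^{m_1}\d\cdots\d1^{m_{i-1}}$ to $1^{m_1}\d\cdots\d1^{m_i}$ refines this tail by locating block~$i$: either it sits in the same part as block $i-1$ and immediately follows it (contributing $x^{m_i}$), or it opens a later part, in which case the remainder of block $i-1$'s part is short (length $<m_i$, contributing $\frac{1-x^{m_i}}{1-x}$), the intervening parts all have length $<m_i$ (contributing $P_i$), and block $i$ opens a new part (contributing $ux^{m_i}$). Summing the two cases multiplies the running generating function by exactly $x^{m_i}+\frac{ux^{m_i}(1-x^{m_i})}{1-x-u(x-x^{m_i})}$, and an induction on $n$ then yields the displayed $G_\tau$.

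The hard part is this last decomposition: making ``greedy-leftmost occurrence'' precise and verifying that the two constraints---the short remainder ($<m_i$) of the previous block's part and the short intervening parts ($<m_i$)---are exactly what is needed so that each factorization is counted once and the unrestricted tail produced by one block's gadget is precisely the object refined by the next. Checking the cases $n=1,2$ by hand confirms that multiplication by each bracketed factor $R_i$ implements this refinement, so that the product telescopes to the claimed formula; the reductions in the earlier steps are routine given the results already established.
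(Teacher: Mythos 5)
Your proposal is correct and is essentially the paper's own proof: your ``greedy-leftmost occurrence'' marking is exactly the paper's device of the unique truncation that \emph{minimally} contains each prefix pattern $1^{m_1}\d\cdots\d 1^{m_i}$, your cases (a)/(b) coincide with its two cases (same number of parts, i.e.\ append exactly $m_i$ ones, versus a short remainder, short intervening parts, and a new part $1^{m_i}$), and the resulting factors and free tail multiply to the same telescoping product \eqref{G}. The preliminary reduction to single-letter factorizations via Theorem~\ref{key} and Proposition~\ref{phifact}, writing each factor as $e^{tx_i}-T\{G_\tau(x_i,u)\}$, is likewise the same as the paper's.
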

\begin{proof}
Earlier we observed that $e^{tx}$ is the Laguerre series for all factorizations using only a single letter with respect to the weight $w(\phi) = x^{\len(\phi)}$.  So it is enough to show that $G_\tau(x,u) = g_{A,w}(u)$ where $A$ is the set of factorizations on some single letter alphabet containing the given vincular pattern; the difference will then count the factorizations on a single letter which avoid the pattern.  For the remainder of the proof, we will use only the alphabet $\{1\}$.
We will use repeatedly the following basic facts:

\begin{fact}  Given two sets of factorizations $A_1$ and $A_2$, let $$A_3 = \{ (\phi_1)\ldots(\phi_l)(\psi_1)\ldots(\psi_m): \phi = (\phi_1)\ldots(\phi_l) \in A_1, \psi = (\psi_1)\ldots(\psi_m) \in A_2\}.$$  Then $g_{A_3,w}(u) = g_{A_1,w}(u) \cdot g_{A_2,w}(u)$ when both sides are defined.
\end{fact}

\begin{fact}Let $A$ be a set of factorizations on the alphabet $\{1\}$.  Then for a fixed integer $k\geq 0$, let $A_k$ be the set consisting of each factorization of $\phi$ with $k$ ones appended to the last factor: $A_k = \{(\phi_1)(\phi_2) \cdots (\phi_n 1^k): (\phi_1) \cdots (\phi_n) \in A\}$.  Then $g_{A_k,w}(u) = x^kg_{A,w}(u)$.\end{fact}

We say that a factorization {\it minimally} contains the pattern $1^{m_1} \d 1^{m_2} \d \cdots \d 1^{m_n}$ if it contains the pattern, but no longer contains the pattern if the last letter of the last factor is removed.  For example, $(11)(1)(1)(111)$ contains $111$ minimally, contains $11\d11$ but not minimally, and does not contain $1111$ at all.  Define a {\it truncation} of a factorization $\phi = (\phi_1) \cdots (\phi_n)$ to be a factorization $\hat{\phi} = (\phi_1) \cdots (\phi_{i-1}) (\hat{\phi_i})$, where $1 \leq i \leq n$ and $\hat{\phi_i}$ is a nonempty initial substring of $\phi_i$.  Any factorization containing a pattern has a unique truncation that contains that pattern minimally, so we will count the factorizations containing it minimally and then multiply by the appropriate expression to count all factorizations that contain the pattern.

First, suppose the pattern is just $1^{m_1}$.  A factorization minimally containing this pattern is any factorization whose last part has size $m_1$, and all other parts have size strictly smaller.  So the generating function for these factorizations is \[ \sum_{n=0}^\infty (u(x + \ldots + x^{m_1-1}))^nux^{m_1} = \left(\frac{ 1 - x}{1 - x - u(x - x^{m_1})}\right)ux^{m_1}.\]

Now let the generating function for the number of words minimally containing the pattern $1^{m_1} \d \cdots \d 1^{m_{n}}$ be $g_n(x,u)$; we will find $g_n(x,u)$ in terms of $g_{n-1}(x,u)$.  Let $\phi$ be a factorization minimally containing $1^{m_1} \d\cdots\d 1^{m_{n-1}} \d 1^{m_n}$; then $\phi$ has a unique truncation $\hat{\phi}$ that minimally contains  $1^{m_1} \d\cdots \d 1^{m_{n-1}}$. Thus to find $g_n(x,u)$ we multiply $g_{n-1}(x,u)$ by the appropriate factor, mimicking the following process by which any such $\phi$ is attained uniquely from some $\hat{\phi}$. If $\phi$ and $\hat{\phi}$ have the same number of factors, $\phi$ must be $\hat{\phi}$ with exactly $m_n$ $1$'s appended to the last factor; so we multiply by $x^{m_n}$.  Otherwise, we can assume a new factor is necessary.  In this case, we may lengthen the final part of $\phi$ by adding $1$'s, but to avoid having the pattern $1^{m_1} \d \cdots \d 1^{m_{n}}$ without adding any factors at most $m_n-1$ should be added.  This means we should first multiply by $$(1 + x + \ldots x^{m_n-1}) = \frac{1-x^{m_n}}{1-x}$$.  Then, we may append some (possibly empty) sequence of factors to $\hat{\phi}$, each having length less than $m_n$.  This amounts to multiplying by $$\sum_{i=0}^\infty (u(x + \ldots + x^{m_n - 1}))^i = \frac{ 1 - x}{1 - x - u(x - x^{m_{n})}}.$$ Finally, we append a new factor which is just $1^{m_n}$, multiplying by $ux^{m_n}$.   Thus we multiply by \[\frac{1-x^{m_n}}{1-x} \cdot \frac{ 1 - x}{1 - x - u(x - x^{m_{n})}}\cdot ux^{m_n}.  \]  So the ordinary generating function minimally avoiding $1^{m_1} -\ldots -1^{m_{n}}$ is exactly \[g_n(x,u) = g_{n-1}(x,u) \cdot \left(x^{m_n} + \frac{ ux^{m_n}(1 - x^{m_n})}{1 - x - u(x - x^{m_n})}\right).\]  Finally, once we have a factorization minimally containing the pattern, we may add any number of ones to the last factor and append any list of factors and the resulting factorization will still contain the pattern.  This amounts to multiplying by \[   \sum_{j=0}^\infty\sum_{i=0}^\infty x^j \left(u( x + x^2 + \ldots)\right)^i = \frac{1}{1-x} \frac{1 -x}{1 - x - ux}.\] Putting this all together, we see inductively that the generating function for factorizations containing the pattern is
\begin{align*}
\frac{ ux^{m_1}(1-x) }{(1-x - u(x-x^{m_i}))} \prod_{i=2}^n \left[ x^{m_i} + \frac{1 - x^{m_i}}{1-x - u(x-x^{m_i})}\right] \frac{1}{1 - x - ux}.
\end{align*}
\end{proof}

\section{Cyclically avoiding patterns}

We say that a word $W$ cyclically avoids a vincular pattern $\tau$ if $W$ avoids $\tau$ no matter how its letters are cycled.  More formally, let $r$ be the function that cycles $W$, moving the last letter into the first position: $r(a_1 \cdots a_n) = a_n a_1 \cdots a_{n-1} $.  Then $W$ cyclically avoids $\tau$ if $r^k(W)$ avoids $\tau$ for each $k$.

In order to find the generating function for the number of words cyclically avoiding the pattern $\tau = 1^m \d 1^m \d \cdots \d 1^m$, we will need a little more information than provided by the generating function $G_\tau(x,u)$ defined by ~\eqref{G}.  Let $H(x,u,v) = g_{A,w}(u)$ where $A$ is the set of factorizations on the alphabet $\{1\}$ {\it avoiding} the pattern $\tau$ and $\hat{w}$ is the weight $\hat{w}(\phi) = x^{\len(\phi)} u^{\parts(\phi)}  v^{ \fst(\phi)}$ where $\fst(\phi)$ is the size of the first factor of $\phi$.  Note that $\hat{w}$ is trivially a weight by our definition since we are using a singleton alphabet, but generally is not. We will find a  closed-form expression for $H(x,u,v)$, although it is rather unwieldy.

\begin{lemma}\label{hlemma} The generating function $H(x,u,v)$ is given by
\begin{align}
H(x,u,v) = 1 &+ \left[ \frac{1-x}{(1-vx)(1-x-ux)}\right] \biggl[{u(vx - (vx)^{mn})} +\biggl.\notag\\
& \left.\frac{{u^{2}x^{m} \left({\left(1-vx\right)} {\left(z - (vx)^{m}\right)} z^{n - 1} - {\left(1-(vx)^{m}\right)} {\left(z^n -(vx)^{m n}\right)}\right)} }{{\left(z-(vx)^{m}\right)} {\left({1-x - u\left(x - x^{m}\right)}\right)}}\right]\label{H}
\end{align}
where $$z = x^m + \frac{ux^m(1 - x^m)}{1 - x - u(x - x^m)}.$$   
\end{lemma}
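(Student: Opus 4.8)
The plan is to reduce the lemma to a transparent combinatorial description of the avoiders and then extract a truncated coefficient of an auxiliary three-variable generating function. First I would record the following characterization. Since the inserted $0$'s destroy consecutiveness, a run of $m$ consecutive $1$'s in a factorization $\phi=(\phi_1)\cdots(\phi_k)$ on $\{1\}$ must lie inside a single factor, and the maximum number of pairwise disjoint such runs is $\sum_i \lfloor \phi_i/m\rfloor$. Because $\tau=1^m\d\cdots\d 1^m$ (with $n$ blocks) is contained exactly when $n$ disjoint $m$-runs can be found in order, $\phi$ avoids $\tau$ if and only if $\sum_i \lfloor \phi_i/m\rfloor \le n-1$. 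Thus $H(x,u,v)$ is the sum of $x^{\len(\phi)}u^{\parts(\phi)}v^{\fst(\phi)}$ over all factorizations whose part-quotients $q_i=\lfloor \phi_i/m\rfloor$ have total at most $n-1$.

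Next I would introduce a formal marker $y$ for the total quotient and build a series from which $H$ is obtained by a partial sum. Classifying a part by $q_i$, a part of quotient $0$ (size $1,\dots,m-1$) contributes $\sigma=u\frac{x-x^m}{1-x}$, while the parts of quotient $\ge 1$ contribute $\beta(y)=u\frac{1-x^m}{1-x}\cdot\frac{yx^m}{1-yx^m}$; weighting the first part by $v$ (that is, $x\mapsto vx$) gives the analogous $\sigma_v$ and $\beta_v(y)$. A factorization is either empty or a first part followed by an arbitrary sequence of further parts; writing $R(y)=\frac{1}{1-\sigma-\beta(y)}$ for the generating function of such a sequence, the marked series is
\begin{align*}
\widetilde H(y) = 1 + \bigl(\sigma_v+\beta_v(y)\bigr)R(y),
\end{align*}
and $H(x,u,v)=\sum_{j=0}^{n-1}[y^j]\widetilde H(y)=[y^{n-1}]\frac{\widetilde H(y)}{1-y}$.

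The computational heart is extracting these coefficients and summing. I would first record $\frac{1}{1-\sigma}=\frac{1-x}{1-x-u(x-x^m)}$ and, with $C:=\frac{u(1-x^m)}{1-x-u(x-x^m)}$, the identity $z=x^m(1+C)$; these give $[y^j]R(y)=\frac{1}{1-\sigma}$ for $j=0$ and $[y^j]R(y)=\frac{Cx^m}{1-\sigma}z^{j-1}$ for $j\ge 1$. Writing $\beta_v(y)=\sum_{q\ge1}b_q y^q$ with $b_q=u\frac{1-(vx)^m}{1-vx}(vx)^{mq}$, the partial sum $\sum_{j=0}^{n-1}[y^j]\bigl[(\sigma_v+\beta_v(y))R(y)\bigr]$ unfolds into a double sum over the first-part quotient $q$ and the tail quotient, which reduces to the finite geometric sums $\sum_i z^i$, $\sum_q (vx)^{mq}$, and the mixed sum $\sum_q (vx)^{mq}z^{\,n-1-q}$, a rational function with denominator $z-(vx)^m$ and numerator assembled from $z^{n-1}$, $z^n$ and $(vx)^{mn}$. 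The mixed sum is the origin of the denominator $z-(vx)^m$ in~\eqref{H}, the sum over $q$ produces the factor $1-vx$, and the key identity $z-1=-\frac{(1-x^m)(1-x-ux)}{1-x-u(x-x^m)}$, together with the expression for $C$, is exactly what converts the denominators $1-x-u(x-x^m)$ coming from $\frac{1}{1-\sigma}$ and $C$ into the factor $1-x-ux$ visible in~\eqref{H}. Assembling the pieces and clearing denominators yields the displayed formula.

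The main obstacle is precisely this final bookkeeping: organizing the double summation over the first-part quotient and the tail quotient, keeping track of the boundary terms at $j=0$ and at the summation limit $n-1$, and then simplifying a fairly large rational expression into the compact form of~\eqref{H} without sign or off-by-one errors. I would guard against mistakes with the limit $n\to\infty$: since $z$ and $vx$ have positive $x$-valuation, $z^n$, $z^{n-1}$ and $(vx)^{mn}$ all vanish as formal power series, so~\eqref{H} must reduce to $1+\frac{uvx(1-x)}{(1-vx)(1-x-ux)}$, the generating function for \emph{all} factorizations under the weight $\hat w$. The case $n=1$, where $A$ consists of the factorizations with every part of size $<m$, gives a second, fully explicit check: there $H$ should collapse to $1+u\frac{vx-(vx)^m}{1-vx}\cdot\frac{1-x}{1-x-u(x-x^m)}$, and verifying that~\eqref{H} simplifies to this (the spurious factor $1-x-ux$ cancelling) confirms both the formula and the bookkeeping.
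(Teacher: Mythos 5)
Your proposal is correct, and it takes a genuinely different route from the paper. The paper never counts avoiders directly: it reuses the generating functions $G_i(x,u)$ from the proof of Theorem~\ref{vincgf} for one-letter factorizations \emph{containing} $\tau_i$, partitions the containers of $\tau_n$ into classes $A_0,\dots,A_n$ according to how much of the pattern fits inside the first factor (which, for this symmetric pattern, is exactly your first-factor quotient $\lfloor\fst(\phi)/m\rfloor$), attaches the $v$-weight to the first factor in each class, and subtracts the sum from the generating function of all one-letter factorizations. You instead work directly with avoiders via the global characterization $\sum_i\lfloor\len(\phi_i)/m\rfloor\le n-1$ --- the same observation the paper exploits only locally, on the first factor --- then mark the total quotient with $y$ and take a truncated coefficient sum. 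Your route buys two things: the closed form $R(y)=\frac{1-yx^m}{(1-\sigma)(1-yz)}$ explains structurally why $z$ appears in~\eqref{H} (in the paper $z$ is merely an abbreviation introduced at the simplification stage), and counting avoiders removes the complementation step. What it costs is the same thing the paper pays: a final stretch of rational-function algebra, which the paper also waves through (``After simplifying, we have the desired formula''); your two consistency checks --- the $n\to\infty$ limit and the $n=1$ case, both of which do hold for~\eqref{H} --- are a reasonable guard on that step. One caveat worth stating explicitly in a final write-up: your floor-sum criterion (like the paper's class decomposition) is valid only because all blocks of $\tau$ have the same length $m$; for a general pattern $1^{m_1}\d\cdots\d 1^{m_n}$, avoidance is not a function of the multiset of factor lengths (e.g., $(1)(11)$ contains $1\d 11$ while $(11)(1)$ avoids it), which is precisely why Section~5 of the paper is restricted to equal blocks.
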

\begin{proof}

Fix $n>0$, and let $\tau_n$ be the pattern $1^m \d 1^m \d \cdots \d 1^m$ with $n-1$ dashes.  Then by the proof of Theorem~\ref{vincgf}, we know that if $G_n(x,u)$ is the generating function~\eqref{G} for the number of factorizations on the alphabet $\{1\}$ containing $\tau_n$, then $$G_n(x,u) = \frac{ ux^{m}(1-x) }{(1-x - u(x-x^{m}))(1-x- ux)} \left(  x^{m} + \frac{ux^{m}(1 - x^{m})}{1-x - u(x-x^{m})}\right)^{n-1}$$  since in this case $m_1 = m_2 = \ldots = m.$  We have different cases depending on how much of the pattern $\tau_n$ is contained in the first part of a factorization.  Let $A$ be the set of all factorizations on the alphabet $\{1\}$ that contain $\tau_n$, and define define $A_0, A_1, \ldots, A_n$ by 
\begin{align*}
A_0 &= \{\phi =  (\phi_1)\cdots(\phi_l) \in A : \phi_1\text{ avoids }\tau_1 = 1^m\}\\
A_i &= \{\phi = (\phi_1)\cdots(\phi_l)\in A: \phi_1 \text{ contains }\tau_i \text{ but }\phi_1 \text{ avoids } \tau_{i+1}\} \text{  for }1 \leq i \leq n-1\}\\
A_n &= \{\phi = (\phi_1)\cdots(\phi_l)\in A : \phi_1 \text{ contains }\tau_n \}.
\end{align*}

First, suppose that $\phi \in A_0$.  Then the first part of $\phi$ is one of $1, 11, \ldots, 1^{m-1}$, and the factorization $(\phi_2)\cdots(\phi_k)$ of the remaining parts can be any factorization containing the pattern.  With the exponent of $v$ recording the size of the first factor, we see that the generating function corresponding to this case is  
\begin{align*}
g_{A_0,w}(u) &= u(vx +(vx)^2 + \ldots + (vx)^{m-1}) G_n(x,u) \\
&= u\cdot \frac{vx - (vx)^m}{1 - vx}  G_n(x,u).
\end{align*}

Now suppose that $\phi \in A_i$, $1 \leq i \leq n-1$. Then the first part of $\phi$ is one of $1^{im}, 1^{im+1}, \ldots, 1^{im + m-1}$ and $(\phi_2)\cdots(\phi_l)$ can be any factorization containing $\tau_{n-i}$.  So the generating function corresponding to $A_i$ is  
\begin{align*}
g_{A_i,w}(u) &= u((vx)^{im} + (vx)^{im+1} + \ldots + (vx)^{im + m -1}) G_{\tau_{n-i}}(x,u)\\
&=  u(vx)^{im}\frac{1 - (vx)^{m}  }{1 - vx}G_{n-i}(x,u).
\end{align*} 
If $\phi \in A_n$, then the first factor of $\phi$ can have any length that is $nm$ or more, and the remaining parts of $\phi$ can be any factorization.  Therefore 
\begin{align*}
g_{A_n,w}(u) &=  u((vx)^{nm} + (vx)^{nm+1} + \ldots)\sum_j(u(x +x^2 + \ldots))^j\\
&= \frac{  u(vx)^{nm}}{(1 - vx)(1 - \frac{ux}{1-x})}.
\end{align*}
To find the generating function for factorizations avoiding the pattern, we must subtract $g_{A_i,w}(u)$, $0 \leq i \leq n$, from the generating function for all one-letter factorizations, which is 
\begin{align*}
1 + u(vx + (vx)^2 + \ldots)\sum_{k=0}^\infty(u(x + x^2 + \ldots))^k = 1 + \frac{uvx}{(1-vx)(1 - \frac{ux}{1-x})}.\\
\end{align*}
We have
\begin{align*}
H(x,u,v) =&\, 1 + \frac{uvx}{(1-vx)(1 - \frac{ux}{1-x})} - \sum_{i=0}^n g_{A_i,w}(u)\\
=& \,1 + \frac{uvx}{(1-vx)(1 - \frac{ux}{1-x})} -  u\cdot \frac{vx - (vx)^m}{1 - vx}  G_{n}(x,u)- \frac{  u(vx)^{nm}}{(1 - vx)(1 - \frac{ux}{1-x})} \\
& -  \sum_{i=1}^{n-1} u(vx)^{im}\frac{1 - (vx)^{i}  }{1 - vx}G_{n-i}(x,u).
\end{align*}
After simplifying, we have the desired formula.
\end{proof}

\begin{theorem}\label{cyclicvinc}
Let $H(x,u,v)$ be the generating function~\eqref{H}.  Let $A$ be the set of words cyclically avoiding the pattern $\tau = 1^m \d 1^m \d\cdots \d 1^m$, with $n-1$ dashes, and let $w$ be the weight on $k$-ary words with $w(s_1 \ldots s_l) = x_{s_1} x_{s_2} \cdots x_{s_l}$.  Then the generating function $\sum_{W\in A} w(W)$ is given by

\[1 + \sum_{i=1}^k \Phi\biggl( t^{-1}\cdot T\left\{ u \frac{d^2}{dv\, du}\bigg|_{v=1}   H(x_i,u,v)\right\} \biggl(- 1 +\prod^k_{\substack{j=1\\
 j\neq i}} T\{H(x_j,u,1)\}\biggr)\biggr) + \sum_{i=1}^k \frac{x_i - x_i^{mn}}{1-x}.\]
In particular, letting $x_i = x$ for each $i$ gives:
\[\sum_{W \in A} x^{\len(W)} = 1 + k\cdot \Phi\biggl( t^{-1}\cdot T\left\{ u \frac{d^2}{dv\, du}\bigg|_{v=1}   H(x,u,v)\} \big(T\{H(x,u,1)\}\right)^{k-1}-1\big)\biggr) + \frac{k(x - x^{mn})}{1-x}.\]
\end{theorem}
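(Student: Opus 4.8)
The plan is to reduce cyclic avoidance to a symmetric condition on run lengths, dispose of the degenerate words, and then count the rest with a \emph{cyclic} version of Proposition~\ref{phifact}. First I would show that for a single letter the cyclic sequence of maximal runs with lengths $\ell_1,\dots,\ell_r$ avoids $\tau = 1^m\d\cdots\d1^m$ (with $n$ blocks) exactly when $\sum_s \lfloor \ell_s/m\rfloor \le n-1$: a copy of $\tau$ is precisely a choice of $n$ disjoint length-$m$ subblocks drawn in order from the runs, so containment is equivalent to $\sum_s\lfloor\ell_s/m\rfloor \ge n$. Since this quantity depends only on the multiset of run lengths, linear and cyclic avoidance of $\tau$ coincide for each fixed letter, and a word cyclically avoids $\tau$ iff every letter satisfies the bound on its cyclic runs. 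The empty word contributes the leading $1$; the one-letter words $i^\ell$ that cyclically avoid $\tau$ are exactly those with $\ell \le mn-1$, giving $\sum_i (x_i + \cdots + x_i^{mn-1}) = \sum_i \frac{x_i - x_i^{mn}}{1-x_i}$, the final sum in the statement. Thus the content is to count words using at least two distinct letters.

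The engine will be the identity $\Phi(t^{-1}l_k(t)) = \frac{(-1)^{k-1}}{k}$, which I would obtain from~\eqref{laguerregf} via
\begin{align*}
\Phi\bigl(t^{-1}e^{ts}\bigr) = \sum_{n\ge 1}\frac{s^n}{n!}\,\Phi(t^{n-1}) = \sum_{n\ge 1}\frac{s^n}{n} = -\log(1-s),
\end{align*}
so that $\sum_k \Phi(t^{-1}l_k(t))\,x^k = -\log\!\bigl(1-\tfrac{x}{1+x}\bigr) = \log(1+x)$. Just as $\Phi(e^{tf}) = \frac{1}{1-f}$ is the ``sequence'' rule behind counting linear words, the rule $\Phi(t^{-1}e^{tf}) = -\log(1-f) = \sum_{n\ge 1}f^n/n$ is the ``cycle'' rule: applying $\Phi\circ t^{-1}$ to a product of Laguerre series counts the corresponding cyclic arrangements, each cyclic structure of length $n$ appearing with symmetry weight $1/n$, the signs being supplied by the Carlitz linearization of Lemma~\ref{linearization} that enforces maximality of runs across the cyclic junction. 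This is the cyclic analogue of Theorem~\ref{fundamental}, and it is what replaces $\Phi$ by $\Phi\circ t^{-1}$ throughout.

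With this in hand I would count the multi-letter words by pointing at one junction. Fix a distinguished letter $i$ whose run will straddle the marked junction. Every other letter's runs then remain intact around the cycle, so for them cyclic avoidance is just linear avoidance, recorded by $H(x_j,u,1)$; by Theorem~\ref{key} the product $\prod_{j\ne i} T\{H(x_j,u,1)\}$ is the Laguerre series of all factorizations on the complementary alphabet with each letter avoiding $\tau$, and subtracting $1$ deletes the empty factorization, forcing a genuine second letter. For letter $i$ I would use $H(x_i,u,v)$, in which $v$ marks the size of the first run; the operator $u\frac{d^2}{dv\,du}\big|_{v=1}$ extracts the weight $\parts(\phi)\,\fst(\phi)$, exactly the data needed to mark a position inside the wrapped run of $i$ and to record how that run splits across the junction, so the cyclic merge of the first and last $i$-runs is correctly reassembled. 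Applying $\Phi\circ t^{-1}$ to the product then simultaneously glues letter $i$ to the others through the Carlitz linearization and divides out the cyclic symmetry, producing each cyclically avoiding multi-letter word exactly once; summing over $i$ and adding the two degenerate terms yields the formula. The specialization $x_i = x$ is immediate: $\sum_i$ becomes the factor $k$ and $\prod_{j\ne i}$ becomes a $(k-1)$st power, as stated.

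The main obstacle is the bookkeeping of the last paragraph: proving that $u\frac{d^2}{dv\,du}\big|_{v=1}H(x_i,u,v)$ together with $\Phi\circ t^{-1}$ really implements ``mark one junction, split the wrapped $i$-run, and quotient by the cyclic symmetry,'' so that every word is counted once with the correct monomial. In particular I must reconcile the pointing weight $\parts(\phi)\,\fst(\phi)$ carried by letter $i$ with the symmetry factor $1/\parts(\chi)$ that $t^{-1}$ attaches to the combined factorization $\chi$, and check that the signs produced by the Carlitz linearization telescope. I would make this precise by setting up the bijection between pointed cyclic words and linear words, tracking the first run of $i$ as the one containing the marked junction, and verifying the resulting generating-function identity against the single-run case of Burstein and Wilf~\cite{Wilf} as a consistency check.
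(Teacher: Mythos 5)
Your proposal assembles the right peripheral pieces — the run-length characterization of cyclic containment, the separate handling of the empty word and of the single-letter words $i^\ell$ with $\ell \le mn-1$, the product $\prod_{j\ne i}T\{H(x_j,u,1)\}-1$ for the other letters, and the derivative $u\frac{d^2}{dv\,du}\big|_{v=1}H(x_i,u,v)$ extracting $\parts(\phi)\fst(\phi)$ — but the central step is exactly the one you defer to your last paragraph, and the engine you propose for it is not adequate. Your identity $\Phi(t^{-1}l_k(t)) = \frac{(-1)^{k-1}}{k}$ and the cycle rule $\Phi(t^{-1}e^{tf}) = -\log(1-f)$ involve a \emph{single} Laguerre index, and the nonzero alternating values they produce show that, in your framing, the count is not sign-free: you would genuinely have to prove a telescoping cancellation across the Carlitz linearization, quotient by cyclic symmetry with weights $1/|\mathrm{stab}|$ (delicate for periodic words), and show the pointing weight compensates exactly. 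None of this is carried out, and it is the whole content of the theorem.

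What actually makes the formula work is a different, two-index fact: the Laguerre orthogonality relation at $\alpha=-1$, namely $\Phi\bigl(p\,t^{-1}l_p(t)l_q(t)\bigr) = \delta_{p,q}$ for $p,q\ge 1$ (this is~\eqref{mylagortho}, derived from~\eqref{lagortho}). It converts $\Phi(t^{-1}f(t)g(t))$, with $f=\sum_{p\ge1}p\,a_p l_p$ and $g=\sum_{p\ge1}b_p l_p$, into the \emph{diagonal} pairing $\sum_{p\ge1}a_pb_p$ — combinatorially, an interleaving of an $i$-factorization with $p$ parts and a factorization on the remaining letters with the \emph{same} number of parts, $\psi_1\phi_1\psi_2\phi_2\cdots\psi_p\phi_p$, with no signs and no Carlitz merging at all. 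The factor $\parts$ supplied by $u\frac{d}{du}$ cancels the $1/p$ from orthogonality, and $\fst(\psi)$ counts each word $W$ (beginning with $i$, not ending in $i$, avoiding $\tau$) with multiplicity $B(W)$, the number of leading $i$'s; since the rotation class $\{W, r^{-1}W,\ldots,r^{-B(W)+1}W\}$ consists precisely of the cyclically avoiding words beginning with $i$ that reduce to $W$, this multiplicity enumerates every such word exactly once — no division by a symmetry factor ever occurs, which is why periodicity causes no trouble. So the gap in your proposal is concrete: you are missing the orthogonality relation $\Phi(t^{-1}l_pl_q)=\delta_{p,q}/p$, and without it (or a worked-out substitute for your sign-telescoping claim) the assertion that ``every word is produced exactly once'' is unsupported. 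Your first two paragraphs are fine as far as they go, but note also that your claim that ``linear and cyclic avoidance coincide for each fixed letter'' is false as stated (e.g.\ $aba$ avoids $11$ but $r(aba)=aab$ does not); what is true, and what the argument needs, is that a word beginning with $i$ and \emph{not ending} with $i$ avoids $\tau$ if and only if it cyclically avoids $\tau$, since rotations only permute or split the runs of each letter.
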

\begin{proof}

We will count all words which avoid $\tau$ and begin with $i$, where $1 \leq i \leq k$, and then sum over $i$.  Let $\mathcal{W}$ be the set of all $k$-ary words $W$ that begin with $i$, do not end in $i$, and avoid the pattern $\tau$.  Any $W \in \mathcal{W}$ will in fact {\it cyclically} avoid $\tau$.  To see this, it is enough to note that the restriction $r^p(W)|_{\{j\}}$, for any $j,p$, will avoid the pattern.  Such a factorization will either be a permutation of the parts of $(W)|_{\{j\}}$, or will have some part of $(W)|_{\{j\}}$ divided into the first and last part of $r^p(W)|_{\{j\}}$. Permuting the factors will not cause $(W)|_{\{j\}}$ to contain the pattern by the symmetry of $\tau$, and splitting a part into two cannot create an instance of the pattern either.

We can characterize $\mathcal{W}$ as follows.  Suppose $\phi = (\phi_1)\ldots(\phi_p)$ is a factorization over the alphabet $[k]\backslash\{i\}$.  Then we can insert $i$'s appropriately in between factors so that the result starts with $i$ and avoids the pattern.  Given a factorization $\psi = (\psi_1)\ldots(\psi_p)$ which avoids the pattern and has the same number of parts as $\phi$ but only uses the symbol $i$, let $W_{\phi,\psi} = \psi_1 \phi_1 \psi_2 \phi_2\ldots \psi_p \phi_p$ be the concatenation of the alternating factors of $\phi$ and $\psi$.  Then $W_{\phi,\psi}$ start with $i$, will not end in $i$, and avoids $\tau$, so $W_{\phi,\psi} \in \mathcal{W}$.  Furthermore, if $W \in \mathcal{W}$, let $\phi$ be the restriction $W|_{[k]\backslash \{i\}}$, and $\psi = W|_{\{i\}}$.  Then $\phi$ and $\psi$ will have the same number of factors, and $W = W_{\phi,\psi}$.  So we see that $\mathcal{W}$ coincides with the set of $W_{\phi,\psi}$ where $\phi,\psi$ are nonempty factorizations over the alphabets $[k]\backslash \{i\}$ and $\{i\}$, respectively, which avoid $\tau$ and have the same number of parts.
 
The words $W \in \mathcal{W}$ we have described always end in a letter besides $i$; to get all words avoiding the pattern starting with $i$, we associate $W\in \mathcal{W}$ with a class $$\{W, r^{-1}W, r^{-2}W, \ldots,r^{-B(W)+1}W\},$$ where $B(W)$ is the number of adjacent $i$'s at the beginning of $W$. These rotated words will also cyclically avoid the pattern, as we have explained, and begin with at least one $i$.  Furthermore, any word $V$ beginning with $i$ and avoiding the pattern will be in the class of a unique word $W$: let $W = r^CV$ where $C$ is the number of $i$'s on the end of $V$, if any.  Therefore we will count all words $W$ that avoid the pattern and begin with $i$, but do not end with $i$, with the multiplicity $B(W)$, the number of $i$'s at the beginning of the word $W$.  This will account for all words beginning with $i$ that use at least one letter $j \neq i$.  So the weight of all $k$-ary words that begin with $i$, cyclically avoid the pattern $\tau$, and contain at least one letter $j \neq i$ is $$\sum_{W \in \mathcal{W}} B(W)w(W) = \sum_{\psi,\phi} \fst(\phi) w(\phi)w(\psi)$$ where the sum on the right-hand side is taken over all factorizations $\phi,\psi$ over the alphabets $[k]\backslash \{i\}$ and $\{i\}$, respectively, which avoid $\tau$ and have the same number of parts.

For any $1 \leq j \leq k$ let $A_j$ be the set of nonempty factorizations on the alphabet $\{j\}$ that avoid the pattern. Define
\[b_p = \sum_{{\substack{\phi \in A_1 * \cdots *\hat{A_i} * \cdots * A_k\\\parts(\phi) = p}}} w(\phi),\] where the $\hat{A_i}$ indicates that $A_i$ is removed, and let \[a_p = \sum_{\phi \in A_i, \parts(\phi) = p} \fst(\phi)x_i^{\len(\phi)}.\]  Then by the previous discussion, we see that $$\sum_{W \in \mathcal{W}} B(W) w(W) = \sum_{p= 1}^\infty a_p b_p$$ is the weight of all $k$-ary words avoiding the pattern $\tau$ which begin with $i$ and use at least one letter $j \neq i$.  It remains to evaluate the right-hand side.

We make use of the well-known fact that the generalized Laguerre polynomials obey the orthogonality relation
\begin{align}\label{lagortho}
\int_0^\infty x^\alpha e^{-x} L_i^{(\alpha)}(x) L_j^{(\alpha)}(x) \, dx = \frac{\Gamma(i + \alpha + 1)}{i!} \delta_{i,j}.  
\end{align}
which can be found in, e.g., \cite[p. 241]{ortho}.  Although this only holds in general when $\alpha > - 1$, it is true for $\alpha = -1$ if $i \geq 1$.   We have
\begin{align}\label{mylagortho}
\Phi(it^{-1} l_i(t) l_j(t)) = \delta_{i,j}
\end{align}
for $i\geq 1$. Thus for a function $f(t)$ that is a formal sum of the Laguerre polynomials $l_p(t)$, we can extract the coefficient of $l_p(t)$ by evaluating $\Phi( pt^{-1} l_p(t) f(t))$, for $p\geq 1$.  Furthermore, we see that if $f(t) = \sum_{p\geq 1} pa_p l_p(t)$ and $g(t) = \sum_{p\geq 1} b_p l_p(t)$, we have
\begin{align}\label{ab}
\Phi(t^{-1} f(t)g(t)) = \sum_{p\geq 1} a_p b_p
\end{align}
when both sides are well-defined.  We require $p\geq 1$ since the orthogonality relation~\eqref{mylagortho} fails when $i = 0$.

In this case, $\sum_{p \geq 1} b_p l_p(t)$ is the Laguerre series for {\it nonempty} $k$-ary words avoiding the pattern that do not use $i$; it is given by $\left(\prod_{j \neq i} T\{H(x_j,u,1)\}\right) - 1$.  To find $\sum_{p \geq 1} pa_p l_p(t)$, recall from the Lemma~\ref{hlemma} that $H(x,u,v) = \sum_{\phi \in A_i} x^{\len(\phi)} u^{\parts(\phi)}  v^ {\fst(\phi)}$.  So \[u \frac{d^2}{dv\, du}\bigg|_{v=1}   g(x_i,u,v)  =   \sum_{\phi \in A_i, \phi \neq \emptyset}  \fst(\phi)\cdot \parts(\phi) \cdot x_i^{\len(\phi)} u^{\parts(\phi)}.\] Then applying $T$, we see that \[T\left\{u \frac{d^2}{dv\, du}\bigg|_{v=1}H(x_i,u,v)\right\} = \sum_{p \geq 1} p a_p l_p(t).\]  Thus by~\eqref{ab},
\begin{align}\label{beginningwithi}
\Phi\left(t^{-1}\cdot T\left\{  u\frac{d^2}{dv\, du}\bigg|_{v=1}   H(x_i,u,v)\right\}\cdot \prod_{j \neq i} \bigl(T\{G(x_j,u)\}\bigr)\right)
\end{align}
is the generating function for the number of $k$-ary words that begin with $i$, use at least one $j\neq i$, and cyclically avoid the pattern.  This does not account for the words that use only the letter $i$ and avoid the pattern, which are the words $i^k$, with $k< mn$.  So we must add $x_i + x_i^2 + \ldots + x_i^{mn-1} = \frac{x_i - x_i^{mn}}{1 - x_i}$ to ~\eqref{beginningwithi} to get the full generating function for all words that begin with $i$ and cyclically avoid $\tau$.  Summing over $i$ and adding $1$ to represent the empty factorization gives \[1 + \sum_{i=1}^k \Phi\biggl( t^{-1}\cdot T\left\{ u \frac{d^2}{dv\, du}\bigg|_{v=1}   H(x_i,u,v)\right\} \biggl(- 1 +\prod^k_{\substack{j=1\\
 j\neq i}} T\{H(x_j,u,1)\}\biggr)\biggr) + \sum_{i=1}^k \frac{x_i - x_i^{mn}}{1-x}\] as desired.
\end{proof}

If we set $n=1$, considering $k$-ary words that cyclically avoid $1^m$, the formula simplifies considerably.  After some computation, which we omit here, we arrive at the following formula.

\begin{corollary}
Let $A$ be the set of nonempty $k$-ary words avoiding $1^m$.  Let $w$ be the weight $w(a_1 \cdots a_l) = x_{a_1} x_{a_2} \cdots x_{a_l}$.  Then 
\[\sum_{W \in A} w(W) = \sum_{i = 1}^k \frac{  x_i^{2m} - m x_i^{m + 1} + (m - 1)x_i^{m}}{ (x_i^{m} - 1)(x_i -1) } + \frac{\sum_{i=1}^k \frac{ (m - 1)x_i^{m+1} - m x_i^{m} + x_i}{(x_i^{m} - 1)^2} }{1 - \sum_{i =1}^k \frac{x_i^{m} - {x_i}}{ x_i^{m} - 1}}.\]
\end{corollary}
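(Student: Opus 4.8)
The plan is to specialize Theorem~\ref{cyclicvinc} to $n=1$ (so that $\tau = 1^m$ is a single run) and to carry out the two transforms $T$ and $\Phi$ explicitly. Since $A$ is the set of \emph{nonempty} words, I would drop the leading $+1$ that accounts for the empty factorization; everything else in the displayed formula survives. The first task is to simplify $H(x,u,v)$ at $n=1$: setting $n=1$ in \eqref{H} makes $z^{n-1}=1$, and the bracketed numerator acquires a common factor $z-(vx)^m$ that cancels the like factor in the denominator; after combining the two surviving terms, the factor $1-x-ux$ cancels, leaving the compact form
\[H(x,u,v) = 1 + \frac{(1-x)\,u\,(vx - (vx)^m)}{(1-vx)\bigl(1-x-u(x-x^m)\bigr)}.\]
(As a sanity check, this is exactly the ordinary generating function $\sum_\phi x^{\len\phi}u^{\parts\phi}v^{\fst\phi}$ for factorizations on $\{1\}$ whose parts all have length at most $m-1$.) In particular $H(x,u,1) = \frac{1-x}{1-x-u(x-x^m)}$, so by \eqref{lacefunforsmallruns} we get $T\{H(x_j,u,1)\} = \exp\!\bigl(t\,\mu_j\bigr)$ with $\mu_j := \frac{x_j - x_j^m}{1-x_j^m}$, whence $\prod_{j\neq i} T\{H(x_j,u,1)\} = \exp\!\bigl(t\sum_{j\neq i}\mu_j\bigr)$.

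Next I would compute the derivative term. Writing $H = 1 + u\,Q(u)\,R(v)$ with $Q(u) = \frac{1-x}{1-x-u(x-x^m)}$ and $R(v)=\frac{vx-(vx)^m}{1-vx}$, the mixed derivative factors as $\frac{d^2}{dv\,du}H = \bigl(Q+uQ'\bigr)R'$, and the combination telescopes to $u\bigl(Q+uQ'\bigr) = \frac{u(1-x)^2}{(1-x-u(x-x^m))^2}$. Evaluating $R'$ at $v=1$ gives $R'(1)=x\,\frac{1-mx^{m-1}+(m-1)x^m}{(1-x)^2}$, so the $(1-x)^2$ cancels and
\[u\,\frac{d^2}{dv\,du}\Big|_{v=1}H(x,u,v) = \frac{u\,\bigl(x - mx^m + (m-1)x^{m+1}\bigr)}{\bigl(1-x-u(x-x^m)\bigr)^2}.\]
To apply $T$ I would use the inverse-Laplace description from Section~3: substituting $u=\frac{1-s}{s}$ and dividing by $s(1-s)$ turns the denominator into $\bigl(s(1-x^m)-(x-x^m)\bigr)^2$, a double pole at $s=\mu$, so $\mathcal{L}^{-1}$ yields $t\,e^{\mu t}$ and overall $T\{\,\cdot\,\} = K\, t\, e^{\mu t}$ with $K := \frac{x - mx^m + (m-1)x^{m+1}}{(1-x^m)^2}$.

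Finally I would assemble the pieces. The factor $t^{-1}$ in Theorem~\ref{cyclicvinc} cancels the $t$ from $K_i\,t\,e^{\mu_i t}$, and the bracket $-1 + \exp\!\bigl(t\sum_{j\neq i}\mu_j\bigr)$ turns the $i$th summand into $K_i\bigl(-e^{\mu_i t} + e^{t\sum_{j}\mu_j}\bigr)$; since each $K_i$ is fixed by $\Phi$, the rule $\Phi(e^{tf}) = \frac{1}{1-f}$ gives $K_i\bigl(-\frac{1}{1-\mu_i} + \frac{1}{1-\sum_j \mu_j}\bigr)$. Summing over $i$ and adding the tail $\sum_i \frac{x_i-x_i^m}{1-x_i}$ (from $\sum_i\frac{x_i-x_i^{mn}}{1-x_i}$ at $n=1$), the pooled term $\frac{1}{1-\sum_j\mu_j}\sum_i K_i$ already matches the large fraction in the claim, since $K_i = \frac{(m-1)x_i^{m+1}-mx_i^m+x_i}{(x_i^m-1)^2}$ and $\mu_i = \frac{x_i^m-x_i}{x_i^m-1}$. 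For the remaining terms, using $1-\mu_i = \frac{1-x_i}{1-x_i^m}$ I would combine $-\frac{K_i}{1-\mu_i}+\frac{x_i-x_i^m}{1-x_i}$ over the common denominator $(1-x_i^m)(1-x_i)=(x_i^m-1)(x_i-1)$; the numerator collapses to $x_i^{2m}-mx_i^{m+1}+(m-1)x_i^m$, which is precisely the first summand of the stated formula, completing the proof.

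I expect the main obstacle to be the two algebraic collapses — confirming that the $n=1$ specialization of \eqref{H} really telescopes to the compact $H$ above, and that the mixed-derivative numerator reduces as stated — together with executing the $T$-transform on a rational function with a \emph{double} pole; once those are secured, the passage through $\Phi$ and the concluding partial-fraction identity are routine bookkeeping.
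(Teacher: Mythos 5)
Your proposal is correct and follows exactly the route the paper intends: the paper states that Corollary 5.3 follows from Theorem~\ref{cyclicvinc} at $n=1$ ``after some computation, which we omit here,'' and your computation is a valid execution of that omitted work --- the $n=1$ collapse of $H$, the identity $u(Q+uQ') = \frac{u(1-x)^2}{(1-x-u(x-x^m))^2}$, the transform $T$ giving $K\,t\,e^{\mu t}$ via the double pole at $s=\mu$, and the final partial-fraction assembly all check out. The only point worth flagging is interpretive, not mathematical: you correctly read ``avoiding $1^m$'' in the corollary's statement as \emph{cyclically} avoiding, which is what the derivation from Theorem~\ref{cyclicvinc} and the attribution of the specialized Corollary 5.5 to Burstein--Wilf both require.
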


In the book by Heubach and Mansour \cite{Mansour1}, the authors define a cyclic Carlitz composition as a Carlitz composition so that the first and last parts are not equal; they ask (Research Direction 3.3) for the generating function for the number of cyclic Carlitz compositions.  If we let $k$ approach infinity, $m=2$, and $x_i = x^i$, we get the following.

\begin{corollary}
Let $A$ be the set of cyclic Carlitz compositions.  Then
\[ \sum_{W \in A} x^{\operatorname{sum}(W)} = \frac{\sum_{i=1}^\infty \frac{x^i}{(1 + x^i)^2}}{1 - \sum_{i=1}^\infty \frac{x^i}{1 + x^i}} + \sum_{i=1}^\infty \frac{x^{2i}}{1 + x^i}.\]
\end{corollary}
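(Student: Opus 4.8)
The plan is to obtain this directly from the preceding corollary by specializing $m=2$, substituting $x_i = x^i$, and letting $k \to \infty$. First I would record the combinatorial dictionary that makes this specialization meaningful. A nonempty $k$-ary word $a_1\cdots a_l$ is exactly a composition whose parts are $a_1,\ldots,a_l \in [k]$, and under $x_i = x^i$ its weight becomes $\prod_{j} x^{a_j} = x^{\operatorname{sum}(W)}$, so the weight $w$ specializes to $x^{\operatorname{sum}}$ on compositions. Cyclically avoiding the pattern $\tau = 1^m$ with $m=2$ is precisely the condition that no two cyclically adjacent parts are equal, i.e.\ that the composition is cyclic Carlitz. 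Allowing $k \to \infty$ removes the bound on part size, so the left-hand side of the preceding corollary tends to $\sum_{W \in A} x^{\operatorname{sum}(W)}$, the sum over all cyclic Carlitz compositions.

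It then remains to simplify the right-hand side of the preceding corollary at $m=2$, $x_i = x^i$. Setting $m=2$, each summand of the leading term factors as
\[\frac{x_i^{4} - 2x_i^{3} + x_i^{2}}{(x_i^{2}-1)(x_i-1)} = \frac{x_i^{2}(x_i-1)^2}{(x_i-1)^2(x_i+1)} = \frac{x_i^{2}}{x_i+1},\]
which under $x_i = x^i$ becomes $\frac{x^{2i}}{1+x^i}$. Likewise the numerator inside the fraction simplifies via
\[\frac{x_i^{3} - 2x_i^{2} + x_i}{(x_i^{2}-1)^2} = \frac{x_i(x_i-1)^2}{(x_i-1)^2(x_i+1)^2} = \frac{x_i}{(x_i+1)^2},\]
giving $\frac{x^i}{(1+x^i)^2}$, and the denominator simplifies using $\frac{x_i^{2}-x_i}{x_i^{2}-1} = \frac{x_i}{x_i+1}$ to $1 - \sum_i \frac{x^i}{1+x^i}$. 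Assembling these pieces reproduces the stated expression.

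The one point that warrants care is the passage $k\to\infty$, which I would justify in the ring of formal power series in $x$. Each of the three building blocks $\frac{x^i}{1+x^i}$, $\frac{x^{2i}}{1+x^i}$, $\frac{x^i}{(1+x^i)^2}$ has lowest-order term of degree at least $i$, so for any fixed $N$ only the finitely many indices $i \le N$ affect the coefficient of $x^N$; hence the infinite sums converge in $\R[[x]]$ and, for $k \ge N$, the degree-$N$ part of the $k$-ary formula already agrees with the limiting one. The denominator $1 - \sum_{i\ge 1}\frac{x^i}{1+x^i}$ has constant term $1$ and is therefore invertible as a formal power series. Combinatorially the same degree bound shows that any cyclic Carlitz composition of $N$ uses only parts of size at most $N$, so the limit of the left-hand side genuinely enumerates all cyclic Carlitz compositions. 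I expect this bookkeeping to be the only obstacle; the algebraic simplification is routine once $m=2$ is inserted.
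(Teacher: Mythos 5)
Your proposal is correct and follows exactly the paper's own route: the paper obtains this corollary by specializing the preceding one at $m=2$, $x_i = x^i$, and letting $k \to \infty$, which is precisely what you do (the paper simply states the specialization without carrying out the algebra or the formal-power-series justification of the limit, both of which you supply correctly). No discrepancies to report.
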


Setting $x_i = x$ in Corollary 5.3 and simplifying gives the following formula.

\begin{corollary}
Let $A$ be the set of nonempty $k$-ary words that cyclically avoid $1^m$.  Then \[\sum_{W \in A} x^{\len(W)} = \frac{1 - x^{m-1}}{ 1 - x} \left(kx + (k-1)x\left(\frac{m - (m-1)kx}{1 - kx + (k-1)x^m} - \frac{m}{1 - x^{m}}\right)\right).\]
\end{corollary}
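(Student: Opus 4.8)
The plan is to derive Corollary 5.5 directly from Corollary 5.3 by the specialization $x_i=x$ for every $i$, followed by a rational-function simplification. Because each summand of Corollary 5.3 depends on a single $x_i$, setting $x_i=x$ turns every $\sum_{i=1}^k$ into a plain factor of $k$. Writing $S$ for the resulting expression, its first sum becomes $k\,\frac{x^{2m}-mx^{m+1}+(m-1)x^m}{(x^m-1)(x-1)}$, the numerator of its second term becomes $k\,\frac{(m-1)x^{m+1}-mx^m+x}{(x^m-1)^2}$, and its denominator becomes $1-k\,\frac{x^m-x}{x^m-1}$. After this substitution $S$ is a rational function of the single variable $x$ (with $k,m$ as parameters), and the task is to show $S$ equals the claimed closed form $T$.

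First I would simplify the two pieces of $S$ separately. For the first term, factor the numerator as $x^m\bigl(x^m-mx+(m-1)\bigr)$ and note that $x=1$ is a root of the second factor: indeed $x^m-mx+(m-1)=(x^m-1)-m(x-1)=(x-1)\sum_{j=1}^{m-1}(x^j-1)$. The $(x-1)$ in the denominator therefore cancels, leaving $k\,\frac{x^m\sum_{j=1}^{m-1}(x^j-1)}{x^m-1}$. For the second term, combine its denominator over $x^m-1$ to obtain $1-k\,\frac{x^m-x}{x^m-1}=\frac{(1-k)x^m+kx-1}{x^m-1}$, and use the sign identities $(1-k)x^m+kx-1=-\bigl(1-kx+(k-1)x^m\bigr)$ and $x^m-1=-(1-x^m)$; dividing then yields $\frac{k\bigl((m-1)x^{m+1}-mx^m+x\bigr)}{(1-x^m)\bigl(1-kx+(k-1)x^m\bigr)}$, whose denominators are exactly the two that occur in $T$.

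After these reductions the first term of $S$ has denominator $1-x^m$ (up to sign) and the second has denominator $(1-x^m)\bigl(1-kx+(k-1)x^m\bigr)$, so $S$ lives over the common denominator $(1-x^m)\bigl(1-kx+(k-1)x^m\bigr)$. The target $T$ sits over the same denominator once its polynomial prefactor $\frac{1-x^{m-1}}{1-x}=\sum_{j=0}^{m-2}x^j$ is distributed across the bracket. It then remains to verify a single identity between the two numerators, which are polynomials in $x$.

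The main obstacle is precisely the bookkeeping of that numerator identity: $T$ is written in a partially factored, non-canonical form, so the equality is not visible term by term and only emerges after full expansion together with the two cancellations above (the divisibility $(x-1)\mid x^m-mx+(m-1)$ and the sign identity for $(1-k)x^m+kx-1$). To keep the argument rigorous rather than relying on a delicate hand computation, I would treat both sides as elements of $\Q(x)$ with $m,k$ fixed integers, clear the common denominator, and check that the difference of numerators is the zero polynomial; this is exactly the content of the phrase ``and simplifying'' in the statement, and is readily confirmed by computer algebra. As a sanity check I would test $m=2$, where $1-2x+x^2=(1-x)^2$ collapses $T$ to a transparent form (for $k=2$ it gives $2x+\frac{2x^2}{1-x^2}$), and match it against a direct enumeration of the corresponding words.
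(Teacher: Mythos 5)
Your proposal is correct and takes essentially the same route as the paper, which disposes of this corollary in one line: ``Setting $x_i = x$ in Corollary 5.3 and simplifying gives the following formula.'' Your substitution of $x_i = x$ (turning each $\sum_{i=1}^k$ into a factor of $k$), the factorization $x^m - mx + (m-1) = (x-1)\sum_{j=1}^{m-1}(x^j - 1)$, and the sign manipulations simply make explicit the ``and simplifying'' step that the paper omits.
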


This was found by Burstein and Wilf \cite{Wilf}. They go on to show that the number of $k$-ary words of length $n$ cyclically avoiding $1^m$ is asymptotically $\beta^n$, where $\beta$ is the positive root of $x^{m+1} = (k - 1)(1 + x + x^2 + \ldots + x^m)$. In fact, they extract an explicit formula when $n$ is sufficiently large.

We can also give a cyclic version of Theorem~\ref{fundamental}, which can be derived by extracting the coefficient of a monomial $x_1^{n_1}\cdots x_k^{n_k}$ in the generating function from Theorem 5.2 for words cyclically avoiding $1^m$.

\begin{corollary}
Let $n_1, \ldots, n_k$ be positive integers, and let $p_{m ,n}(t)$ be defined as before by $\sum_{n=0}^\infty p_{m,n}(t) \,x^n = \exp{\left( \frac{t(x - x^m)}{1- x^m}\right)}$.  Then \[N\cdot \Phi\left(t^{-1}\cdot \prod_{i =1}^k p_{m,n_i}(t)\right)\] is the total number of $k$-ary words that use the letter $i$ exactly $n_i$ times and cyclically avoid $1^m$, where $N = \sum_{i=1}^k n_i$ is the total number of letters counted with multiplicity.
\end{corollary}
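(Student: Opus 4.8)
By Theorem~\ref{cyclicvinc} with $n=1$, the coefficient of $x_1^{n_1}\cdots x_k^{n_k}$ in the displayed generating function is, by definition, the number of $k$-ary words using letter $i$ exactly $n_i$ times that cyclically avoid $1^m$. So the plan is to extract that coefficient and show it equals $N\cdot\Phi\bigl(t^{-1}\prod_i p_{m,n_i}(t)\bigr)$. First I would identify what each factor contributes. Since $H(x_j,u,1)=\sum_{\phi}x_j^{\len(\phi)}u^{\parts(\phi)}$ is precisely the ordinary generating function for one-letter factorizations whose parts all have length $<m$, the computation leading to \eqref{lacefunforsmallruns} gives $T\{H(x_j,u,1)\}=\sum_n p_{m,n}(t)\,x_j^n$, so $[x_j^{n_j}]\,T\{H(x_j,u,1)\}=p_{m,n_j}(t)$. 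Likewise, exactly as in the proof of Theorem~\ref{cyclicvinc}, $u\,\frac{d^2}{dv\,du}\big|_{v=1}H(x_i,u,v)=\sum_{\phi\neq\emptyset}\parts(\phi)\fst(\phi)\,x_i^{\len(\phi)}u^{\parts(\phi)}$, whose image under $T$ has $x_i^{n_i}$-coefficient $\sum_{\phi\in B_i}\parts(\phi)\fst(\phi)\,l_{\parts(\phi)}(t)$, where $B_i$ is the set of one-letter factorizations using $i$ exactly $n_i$ times with every part of length $<m$.

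Assuming $k\ge 2$ and all $n_i\ge 1$, the additive ``$1$'' and ``$-1$'' and the single-variable tail $\sum_i (x_i-x_i^{m})/(1-x_i)$ contribute no monomial with all exponents positive, so the extracted coefficient is
\[ C=\sum_{i=1}^k \Phi\!\left(t^{-1}\Bigl(\sum_{\phi\in B_i}\parts(\phi)\fst(\phi)\,l_{\parts(\phi)}(t)\Bigr)\prod_{j\neq i}p_{m,n_j}(t)\right). \]
(The degenerate case $k=1$ I would dispatch separately.) Because every factorization counted has at least one part, none of the Laguerre series here carry an $l_0$ term, so the extraction identity \eqref{ab} applies. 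By Theorem~\ref{key}, $\prod_{j\neq i}p_{m,n_j}(t)=\sum_p b_p^{(i)}l_p(t)$ with $b_p^{(i)}$ counting the $p$-part factorizations in the $*$-product of the remaining $B_j$'s, and $\prod_j p_{m,n_j}(t)=p_{m,n_i}(t)\cdot\prod_{j\neq i}p_{m,n_j}(t)$ for \emph{every} $i$.

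Now I would apply \eqref{ab} to both quantities. Writing $a_p^{(i)}=\sum_{\phi\in B_i,\,\parts(\phi)=p}\fst(\phi)$ and $\tilde a_p^{(i)}=\#\{\phi\in B_i:\parts(\phi)=p\}$, the first summand of $C$ has $f=\sum_p p\,a_p^{(i)}l_p$, so \eqref{ab} gives $C=\sum_i\sum_p a_p^{(i)}b_p^{(i)}$. Using the factorization $\prod_j p_{m,n_j}=p_{m,n_i}\cdot\prod_{j\neq i}p_{m,n_j}$ with $p_{m,n_i}=\sum_p p\,(\tilde a_p^{(i)}/p)l_p$, the same identity gives $\Phi\bigl(t^{-1}\prod_j p_{m,n_j}\bigr)=\sum_p (\tilde a_p^{(i)}/p)\,b_p^{(i)}$ for each $i$, whence $N\cdot\Phi\bigl(t^{-1}\prod_j p_{m,n_j}\bigr)=\sum_i n_i\sum_p (\tilde a_p^{(i)}/p)\,b_p^{(i)}$. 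Matching against $C$ termwise, the whole statement reduces to the single identity
\[ p\,a_p^{(i)}=n_i\,\tilde a_p^{(i)},\qquad\text{i.e.}\qquad p\!\!\sum_{\substack{\phi\in B_i\\ \parts(\phi)=p}}\!\!\fst(\phi)=n_i\cdot\#\{\phi\in B_i:\parts(\phi)=p\}. \]

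The crux is this last identity, which I expect to be the only genuinely combinatorial point, though it is short. The set of $\phi\in B_i$ with $\parts(\phi)=p$ is invariant under permuting the $p$ parts, since the only constraints---$p$ nonempty parts of total length $n_i$, each of length $<m$---are symmetric in the parts. Hence $\sum_{\phi}(\text{length of the $r$-th part})$ is independent of $r$, and summing over $r$ yields $p\sum_{\phi}\fst(\phi)=\sum_{\phi}\len(\phi)=n_i\cdot\#\{\phi\}$. Substituting $n_i\tilde a_p^{(i)}=p\,a_p^{(i)}$ back in gives $N\cdot\Phi\bigl(t^{-1}\prod_j p_{m,n_j}\bigr)=\sum_i\sum_p a_p^{(i)}b_p^{(i)}=C$, completing the argument. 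What remains is routine bookkeeping: verifying the $k=1$ boundary case and confirming that $T$, the derivative, and coefficient extraction commute as used.
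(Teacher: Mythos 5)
Your proposal is correct for $k\geq 2$ and takes exactly the route the paper intends (the paper's own ``proof'' is a one-sentence sketch: extract the coefficient of $x_1^{n_1}\cdots x_k^{n_k}$ from Theorem~\ref{cyclicvinc} with $n=1$); your identification of $[x_j^{n_j}]\,T\{H(x_j,u,1)\}=p_{m,n_j}(t)$, the two applications of~\eqref{ab}, and the part-permutation symmetry giving $p\,a_p^{(i)}=n_i\,\tilde a_p^{(i)}$ are all sound, and they supply details the paper omits.

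One caveat: the case $k=1$, which you set aside as routine bookkeeping, cannot in fact be dispatched, because the corollary as stated fails there. Take $k=1$, $m=2$, $n_1=2$: then $p_{2,2}(t)=l_2(t)=\frac{1}{2}t^2-t$, so $N\cdot\Phi\bigl(t^{-1}p_{2,2}(t)\bigr)=2\bigl(\frac{1}{2}-1\bigr)=-1$, while the only candidate word $11$ does not cyclically avoid $11$, so the true count is $0$. (More generally, for $k=1$ and $m=2$ the formula gives $(-1)^{n_1-1}$.) The identity is genuinely a $k\geq 2$ statement --- only then do the additive terms $1$, $-1$, and $\sum_i (x_i-x_i^{m})/(1-x_i)$ in Theorem~\ref{cyclicvinc} drop out of the coefficient extraction, which is where your argument (correctly) uses $k\geq 2$; for $k=1$ it holds only when $n_1<m$. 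This is a flaw in the paper's statement rather than in your argument, but your claim that the boundary case can be verified should be replaced by the observation that it must be excluded.
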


\section{Questions and future directions}
There are a number of related questions that remain open.  Is it possible to extend the work of Section 5, finding generating functions for words avoiding other cyclic patterns? Is there a formula generalizing Theorem \ref{vincgf} or Theorem \ref{cyclicvinc} to to count the number of occurrences of a given pattern of ones?  One might also look for a combinatorial interpretation of some form of {\it composition} of Laguerre series; empirically, it seems that $l_i(l_j(t))$ has nonnegative integer coefficients in the $l_k$-basis for $j>0$. Finally, it would be useful to develop bijections from sets of words with restrictions to other combinatorial objects that are not obviously described in terms of words, using the methods outlined here to count sets that may be otherwise difficult to enumerate.

\bibliographystyle{plain}	


\end{document}